\documentclass[12pt]{article}
\usepackage[english]{babel}
\usepackage{amsmath}
\usepackage{amsthm}
\usepackage{amssymb,amsfonts}
\usepackage{bbm}
\usepackage{url}
\usepackage{dsfont}
\usepackage{enumitem}
\usepackage[margin=2.5cm]{geometry}
\usepackage{caption}
\usepackage{subcaption}
\usepackage{graphicx}
\usepackage{adjustbox}
\usepackage[hidelinks]{hyperref}

\usepackage[dvipsnames]{xcolor}

\newtheorem{assumption}{Assumption}
\theoremstyle{definition}

\newcommand{\footremember}[2]{%
    \footnote{#2}
    \newcounter{#1}
    \setcounter{#1}{\value{footnote}}%
}
\newcommand{\footrecall}[1]{%
    \footnotemark[\value{#1}]%
}

\newcommand{\PP}[1]{\mathbb{P}\left(#1\right)}

\newtheorem{theorem}{Theorem}
\newtheorem{proposition}{Proposition}
\newtheorem{corollary}{Corollary}
\newtheorem{lemma}{Lemma}

\theoremstyle{remark}
\newtheorem{remark}{Remark}[section]

\usepackage{xcolor}

\newcommand{\R}{\mathbb{R}} 

\usepackage[nottoc,notlot,notlof]{tocbibind}
\usepackage{booktabs}

\title{
\normalfont \normalsize   
\huge Strong log-concavity in probit regression 
}
\author{Martin Chak\footremember{alley}{Department of Decision Sciences and BIDSA, Bocconi University}\footnote{martin.chak@unibocconi.it} 
\and Zoraida F. Rico\footrecall{alley} \footnote{zoraida.rico@unibocconi.it}
\and Giacomo Zanella\footrecall{alley} \footnote{giacomo.zanella@unibocconi.it}}
\date{\normalsize\today}

\begin{document}
\maketitle

\begin{abstract}
We show that strong log-concavity emerges in probit regression likelihoods without ridge penalization (i.e.\ Gaussian priors), unlike for the logistic case. 
Specifically, we provide: (a) a non-asymptotic characterization of strong log-concavity for fixed designs, similar to that for the existence of the maximum likelihood estimator (MLE) and (b) an asymptotic analysis for random designs, in the proportional regime when the sample size~$n$ and the number of covariates~$d$ grow proportionally~$d/n\rightarrow r\in[0,1)$. 
In the latter case we show that, provided $r$ is small enough, the resulting condition number is finite and independent of~$n,d,r$ with high-probability. Numerically tractable estimates are given in case~$r=0$. Thus, probit regression provides a non-trivial example of high-dimensional, well-conditioned log-concave objective.

\end{abstract}

\section{Introduction}
Probit regression~\cite{MR3223057} is a classical model for the relationship between binary responses~$(y_i)_{i=1}^n\in\{-1,1\}^n$ and explanatory covariates~$(\bar{x}_i)_{i=1}^n 
\in(\mathbb{R}^{d+1})^n$. 
The model posits the existence of~$\bar{\beta}\in\mathbb{R}^{d+1}$ such that~$(y_i+1)/2$ has the conditional distribution~$\textrm{Bernoulli}(\Phi(\bar{x}_i\cdot \bar{\beta}))$ given~$\bar{x}_i$ for all~$i$, as well as conditional independence in~$(y_i)_{i=1}^n$ given~$(\bar{x}_i)_{i=1}^n$. 
Inference on~$\bar{\beta}$ then falls to finding a minimizer of the corresponding negative log-likelihood function~$l:\mathbb{R}^{d+1}\rightarrow(0,\infty)$ 
given by~$\theta\mapsto l(\theta) = -\sum_{i=1}^n\ln \Phi(y_i\bar{x}_i\cdot\theta)$, in the frequentist approach, or estimating a conditional distribution~$e^{-l}\cdot \pi$ in the Bayesian approach for a prior density~$\pi$ on~$\mathbb{R}^{d+1}$. In both approaches, strong convexity of~$l$ is a desirable property from a computational viewpoint. In particular, it implies a finite condition number~$L/m<\infty$ 
(see~\eqref{probit},~\eqref{eq:def_m_L} below for definitions of~$L,m$), which yields favourable and canonical conditions to study non-asymptotic convergence of optimization algorithms~\cite{MR2142598} in the frequentist approach, and sampling ones~\cite{Chewi26Book} in the Bayesian approach. 
Motivated by this, we study conditions under which~$l$ is strongly convex and the condition number~$L/m$ can be controlled, which can also be of interest beyond the convergence analysis of sampling and optimization algorithms.

In Section \ref{exs}, we show that 
strong convexity of $l$ is equivalent to the condition
\begin{equation}\label{intcon}
\textrm{Span}((\bar{x}_i)_{i\in I_u})=\mathbb{R}^{d+1} \quad\forall\,u\in\mathbb{S}^d,
\end{equation}
where~$I_u = \{i\in[1,n]\cap\mathbb{N}:y_i\bar{x}_i\cdot u\leq 0\}$, 
which is a strengthening of the necessary and sufficient condition for existence of the MLE provided in~\cite{MR1185200}; see Theorem~\ref{aslp} for a precise statement and Remark~\ref{re0} for the comparison with~\cite{MR1185200}. 
Moreover, in the well-specified case (i.e.\ when~$(y_i)_{i=1}^{\infty}$ is generated according to the model with a given~$\bar{\beta}$) under a Gaussian design assumption (Assumption~\ref{as1} below), we show 
that if~$(d+1)/n$ is small enough, more specifically if 
\begin{equation}\label{dncon}
\frac{d+1}{n} + \bigg(\frac{2(d+1)}{n}\ln\bigg(\frac{en}{d+1}\bigg)\bigg)^{1/2} < \mathbb{E}\big[\Phi\big(-\big|\beta_0 + |\beta|Z\big|\big)\big],
\end{equation}
where~$\beta_0\in\mathbb{R}$,~$\beta\in\mathbb{R}^d$ satisfy~$\bar{\beta} = (\beta_0,\beta^{\top})$ and~$Z\sim N(0,1)$, 
then condition~\eqref{intcon} is satisfied with exponentially high (in-$n$) probability; 
see Corollary~\ref{cor1} for a precise statement. Note that the left-hand side of~\eqref{dncon} is a strictly increasing function in~$(d+1)/n\in(0,1)$, whilst the right-hand side is a numerically tractable scalar quantity. 
While the arguments in our proofs differ, condition~\eqref{dncon} is similar in nature to the numerical conditions on~$d/n$ derived in~\cite{MR4065151,MR4168791} for the existence of a MLE. 
However, note that~\eqref{dncon} is only a sufficient condition and we expect \eqref{intcon} to hold more broadly.

We then proceed to study the condition number~$L/m$ explicitly. 
In Section \ref{sec:prop_regime}, we show that~$L/m$ remains bounded with high probability (w.h.p.) as~$n,d$ grow, under the proportional asymptotics regime for small enough~$r=\lim_{n,d\rightarrow\infty} d/n$, in the Gaussian design case. More specifically, in that setting, we show that there exists~$\kappa^*\in[1,\infty)$ independent of~$n,d,r$ such that~$\mathbb{P}(L/m\leq\kappa^*)\rightarrow 1$ as~$n,d\rightarrow\infty$ subject to~$\lim_{n,d\rightarrow\infty} d/n = r$. 
The value of $\kappa^*$ depends on the data-generating parameters (namely~$\beta_0$ and $\gamma_0$ defined after Assumption~\ref{as1} below) and becomes larger as the signal-to-noise ratio increases and the data become closer to separable; see Proposition~\ref{map} for a precise statement. 
Finally, in Section \ref{sec:disprop}, we consider the disproportional regime~$d/n\rightarrow0$ ($r=0$), where we give an explicit and numerically tractable expression for $\kappa^*$ (see Theorem~\ref{nmt}).
For example, in the so-called
null case (i.e.~$\beta_0=\gamma_0=0$), the upper bound reads $\kappa^*=\pi$; see Remark \ref{imt}\ref{imt1}.

Binary regression models are the prototypical application of non-asymptotic convergence analysis, especially in the Markov chain Monte Carlo literature \cite{dalalyan2017theoretical,Chewi26Book}. 
The resulting bounds depend non-trivially (typically linearly) on the posterior condition number, so that controlling the latter is crucial to obtain practically meaningful results.
There the standard approach is to add a ridge penalization/Gaussian prior, which automatically makes the posterior strongly log-concave, but with a condition number that scales like~$n/d$, see e.g.\ \cite[Section~6.2]{dalalyan2017theoretical},~\cite[Section~2.1]{chak2025c}.
This leads to upper bounds that diverge as $n\to\infty$, even if $d$ is fixed, which stands in contrast to classical statistical asymptotic theory~\cite[Section~10.2]{MR1652247}.
Our results instead show that probit condition numbers remain bounded in various regimes of statistical interest, especially when $n/d\to\infty$. This allows us to derive sharp and meaningful complexity results as a direct application of classical algorithmic theory for log-concave objective functions, as well as for more general (not necessarily Gaussian nor proper) prior densities~$\pi$.

Our results stand in contrast to the situation for logistic regression. In the latter case, the Hessian of the negative log-likelihood is~$\mathbb{R}^{d+1}\ni\theta\mapsto\sum_{i=1}^n \rho(\theta\cdot \bar{x}_i) \bar{x}_i\bar{x}_i^{\top}$, for some~$\rho:\mathbb{R}\rightarrow(0,\infty)$ with~$\lim_{z\rightarrow\pm \infty}\rho(z)=0$, as opposed to~\eqref{probit} and~\eqref{pro} below for the probit case. Since~$\cup_{i=1}^n\{\theta\in\mathbb{R}^{d+1}:\theta\cdot \bar{x}_i=0\}$ has Lebesgue measure zero, there are directions along which the Hessian converges to zero at infinity, which always yields an infinite condition number.
On the other hand, we note that logistic regression can exhibit \textit{locally} valid condition numbers without any positive curvature from the regularization/prior, 
which can suffice to obtain good complexities for 
sampling algorithms~\cite{chak2025c}, even if the resulting proof techniques become significantly more delicate and involved.

\paragraph{Notation}
For~$v_0\in\mathbb{R}$,~$v = (v_1,\dots,v_d)\in\mathbb{R}^d$, we use~$(v_0,v^{\top})$ to denote the row vector~$(v_0,\dots,v_d)$. Elements of~$\mathbb{R}^d$ are treated as column vectors. The notation~$\Phi,\phi$ are used for the c.d.f. and p.d.f. respectively of the standard normal distribution. 
For any~$i\in\mathbb{N}\cap[1,n]$, we denote by~$(x_{ji})_{j=1}^d$ the coordinates of~$x_i = (x_{1i},\dots,x_{di})$. We denote~$\mathbb{S}^d:=\{v\in\mathbb{R}^{d+1}\,:\,|v|=1\}$. 

\section{Existence of a finite condition number}\label{exs}

In this section, we give a characterization for a finite condition number, analogous to that for the existence of the MLE (see~\cite[Theorem~1]{MR1185200},~\cite[Section~2.1]{MR4168791} or~\eqref{condsq2} below). A sufficient condition on~$d/n$ is then provided, which is valid with exponentially (in-$n$) high probability for Gaussian covariates. Like~\cite{MR4065151}, this sufficient condition will depend on the signal strength. 
As noted in the introduction, the non-existence of an MLE implies that the log-likelihood is not strongly concave, in which case the condition number is infinite. Thus the regimes where the MLE does not exist, as provided in~\cite{MR4168791}, are also regimes where the condition number is infinite. We note the regimes for existence derived below do not cover the complement of the regimes derived in~\cite{MR4168791}.

We start by writing down the Hessian of~$l$ and stating some basic facts that will be used. The Hessian of~$l$ is the function~$H:\mathbb{R}^{d+1}\rightarrow\mathbb{R}^{(d+1)\times (d+1)}$ given by
\begin{equation}\label{probit}
 H(\theta) = D^2l(\theta) = \textstyle\sum_{i=1}^n \psi(y_i\bar{x}_i\cdot\theta)
\bar{x}_i\bar{x}_i^\top,
\end{equation}
where $\psi:\mathbb{R}\rightarrow\mathbb{R}$ is given by
\begin{equation}\label{psidef}
\psi(z) = \frac{\phi(z)}{\Phi(z)}\bigg(z + \frac{\phi(z)}{\Phi(z)}\bigg).
\end{equation}
The function~$\psi$ can be written as~$\psi(z)=-f'(z)$, where~$f:\mathbb{R}\rightarrow\mathbb{R}$ is given by~$f(z)=\phi(z)/\Phi(z) = \phi(-z)/\int_{-z}^{\infty}\phi(z')dz'$. Thus by~\cite{MR54890} and some standard asymptotic expansions of~$\Phi$, we have
\begin{equation}\label{pro}
\psi(0)=2/\pi, \quad\lim_{z\rightarrow\infty}\psi(z) = 0,\quad \lim_{z\rightarrow-\infty}\psi(z) = 1, \qquad \psi(z)\in(0,1),\quad \psi'(z)<0\quad\forall z\in\mathbb{R}.
\end{equation}
Our goal is to characterize/upper bound the ratio~$L/m$, where
\begin{equation}\label{eq:def_m_L}
L = \textstyle \sup_{\theta\in\R^{d+1}} \lambda_{\max}(H(\theta)) ,\qquad
m = \inf_{\theta\in\R^{d+1}} \lambda_{\min}(H(\theta))\,,
\end{equation}
with $\lambda_{\max}$ and $\lambda_{\min}$ denoting respectively the maximum and minimum eigenvalue of a symmetric positive semidefinite matrix.
Throughout, we refer to $L/m\in[1,\infty]$ as the condition number.

\subsection{Characterization}
To begin with, we do not make any probabilistic (data-generating or distributional) assumptions on~$y_i,\bar{x}_i$ (see Assumption~\ref{as1} below) in the next Theorem~\ref{aslp}, just that~$y_i\in\{-1,1\}$ and~$\bar{x}_i\in\mathbb{R}^{d+1}$ for all~$i$.

\begin{theorem}\label{aslp}
Let~$P:\mathbb{N}\cap[0,n]\rightarrow\mathbb{R}$ and~$s^*$ be given by
\begin{equation}\label{psdef}
P(s) := \inf_{v\in\mathbb{S}^d}\min_{I:|I|=s}\sum_{i\in I}|\bar{x}_i\cdot v|^2, \qquad s^*:=\inf_{u\in\mathbb{S}^d}\sum_{i=1}^n\mathds{1}_{(-\infty,0]}(y_i\bar{x}_i\cdot u)\,.
\end{equation}
The quantities~$L,m$ given by~\eqref{eq:def_m_L} satisfy the following.
\begin{enumerate}[label=(\roman*)]
\item \label{d0}
It holds that
\begin{equation}\label{d0eq}
L/m <\infty \iff \textrm{Span}((\bar{x}_i)_{i\in I_u})=\mathbb{R}^{d+1}\quad\forall \,u\in\mathbb{S}^d,
\end{equation}
where~$I_u = \{i\in[1,n]\cap\mathbb{N}:y_i\bar{x}_i\cdot u\leq 0\}$.
\item \label{d1}
If~$(\bar{x}_i)_{i\in I}$ spans~$\mathbb{R}^{d+1}$ for any choice of~$I\subset[1,n]\cap \mathbb{N}$ with size~$|I|> d$, then 
it holds that
\begin{equation}\label{condsq}
L/m<\infty\iff s^*> d.
\end{equation}
\item \label{d2}
It holds that
\begin{equation*}
L/m\leq (\pi/2)\lambda_{\max}(\textstyle\sum_{i=1}^n\bar{x}_i\bar{x}_i^{\top})/P(s^*).
\end{equation*}
\end{enumerate}
\end{theorem}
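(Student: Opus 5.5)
The argument will run entirely through the quadratic form $v^\top H(\theta)v=\sum_i\psi(y_i\bar x_i\cdot\theta)|\bar x_i\cdot v|^2$, using the properties \eqref{pro} of $\psi$.

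\textbf{The upper curvature $L$.} Since $\psi<1$, we have $L\le\lambda_{\max}(\sum_i\bar x_i\bar x_i^\top)$. Taking $\theta=0$ gives $L\ge(2/\pi)\lambda_{\max}(\sum_i\bar x_i\bar x_i^\top)$. So $L$ is always finite (and positive unless all $\bar x_i=0$, a trivial case). Hence $L/m<\infty$ is equivalent to $m>0$.

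\textbf{The lower curvature $m$ and part (iii).} Fix $\theta\neq0$ and write $\theta=tu$ with $t>0$ and $u\in\mathbb S^d$. For $i\in I_u$ we have $y_i\bar x_i\cdot\theta\le0$, so $\psi(y_i\bar x_i\cdot\theta)\ge\psi(0)=2/\pi$ because $\psi$ is decreasing. Dropping the other (nonnegative) terms,
\[
v^\top H(\theta)v\ \ge\ \tfrac{2}{\pi}\sum_{i\in I_u}|\bar x_i\cdot v|^2 .
\]
At $\theta=0$ every index qualifies, so the same bound holds with $I_u=[1,n]$. Since $|I_u|\ge s^*$ and the summands are nonnegative, restricting to a subset of size $s^*$ gives
\[
v^\top H(\theta)v\ \ge\ \tfrac{2}{\pi}\,\min_{|I|=s^*}\sum_{i\in I}|\bar x_i\cdot v|^2\ \ge\ \tfrac{2}{\pi}P(s^*) .
\]
Thus $m\ge(2/\pi)P(s^*)$, and combined with the bound on $L$ this proves (iii).

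\textbf{Sufficiency in part (i).} Assume $\textrm{Span}((\bar x_i)_{i\in I_u})=\mathbb R^{d+1}$ for every $u$. The quantity $g(u,v)=\sum_{i\in I_u}|\bar x_i\cdot v|^2$ is then positive for every $u,v\in\mathbb S^d$. I need a uniform positive lower bound. The map $u\mapsto I_u$ takes finitely many values, since it is determined by sign patterns. So
\[
\inf_{u,v}g=\min_{I\in\{I_u\}}\ \min_{v\in\mathbb S^d}\sum_{i\in I}|\bar x_i\cdot v|^2,
\]
a minimum over finitely many sets $I$, each of a continuous positive function on a compact sphere. Hence $m>0$. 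This finiteness observation is the key point, since $g$ is not continuous in $u$.

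\textbf{Necessity in part (i).} Suppose instead that for some $u$ there is $v\in\mathbb S^d$ orthogonal to all $\bar x_i$ with $i\in I_u$. Take $\theta=tu$ and let $t\to\infty$. For $i\notin I_u$ we have $y_i\bar x_i\cdot u>0$, so $\psi(ty_i\bar x_i\cdot u)\to0$. For $i\in I_u$ we have $\bar x_i\cdot v=0$. Hence $v^\top H(tu)v\to0$, giving $m=0$ and $L/m=\infty$.

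\textbf{Part (ii).} Assume every index set of size $>d$ spans $\mathbb R^{d+1}$. If $s^*>d$, then every $I_u$ has size $>d$ and so spans, and (i) gives finiteness. Conversely, if $s^*\le d$, then since the infimum defining $s^*$ is over integer values it is attained by some $u$ with $|I_u|\le d<d+1$. Such an $I_u$ cannot span $\mathbb R^{d+1}$, so by (i) $L/m=\infty$.

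The main obstacle is the uniformity in $u$ in the sufficiency direction. It is handled by the finiteness of the family $\{I_u\}$, together with the monotonicity of $\psi$, which lets the nonpositive-margin indices be replaced by the constant $2/\pi$.
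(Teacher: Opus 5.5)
Your proof is correct and follows essentially the paper's argument: the $2/\pi$ lower bound on the indices with nonpositive margin via monotonicity of $\psi$ (giving (iii) after restricting to $s^*$ indices), finiteness of the family $\{I_u\}$ for sufficiency in (i), and $\theta=tu$ with $t\to\infty$ and $v\perp\textrm{Span}((\bar{x}_i)_{i\in I_u})$ for necessity. You also make explicit two points the paper leaves implicit: first, that $L\geq(2/\pi)\lambda_{\max}(\sum_i\bar{x}_i\bar{x}_i^{\top})>0$, so that $L/m<\infty$ is equivalent to $m>0$; second, the short deduction of (ii) from (i).
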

\begin{remark}\label{re0}
\begin{enumerate}[label=(\roman*)]
\item \label{re0n} The right-hand condition in~\ref{d0} is, as must be the case, a strengthening of the corresponding condition for the existence of the MLE (see~\cite[Theorem~1]{MR1185200} or~\cite[Section~2.1]{MR4168791}). To see this (without Theorem~\ref{aslp}): let~$u\in\mathbb{S}^d$. The right-hand condition in~\eqref{d0eq} implies that there exists~$i\in I_u$ such that~$u\cdot \bar{x}_i\neq0$, which implies by definition of~$I_u$ that~$y_i\bar{x}_i\cdot u<0$. Moreover, it implies there exists~$i'\in I_{-u}$ such that~$u\cdot\bar{x}_{i'}\neq0$, which implies~$y_{i'}\bar{x}_{i'}\cdot u>0$.
\item The interest in the setting of~\ref{d1} lies in continuous data. For~$\bar{x}_i=(1,x_i^{\top})^{\top}$ with~$x_i$ independent and identically distributed (i.i.d.) with an absolutely continuous distribution, 
the assumption in Theorem~\ref{aslp}\ref{d1} is verified almost surely below in Proposition~\ref{assp}. 
The analogous assumption also holds without the intercept in that case, but we omit this statement.
\item \label{re2} The inequality~$s^*>d$ alone is not strictly stronger than the corresponding condition
\begin{equation}\label{condsq2}
\inf_{u\in\mathbb{S}^d}\sum_{i=1}^n\mathds{1}_{(-\infty,0)}(y_i\bar{x}_i\cdot u)
> 0
\end{equation}
for the existence of the MLE, since the indicator sets are~$(-\infty,0)$ rather than~$(-\infty,0]$. However under the assumption in Theorem~\ref{aslp}\ref{d1},~$s^*>d$ does imply~\eqref{condsq2} (which is seen without using Theorem~\ref{aslp}). 
\end{enumerate}
\end{remark}

\begin{proof}
By~\eqref{pro}, for any~$v\in\mathbb{S}^d$, the Hessian~\eqref{probit} satisfies
\begin{equation}\label{sps}
\inf_{\theta\in\mathbb{R}^{d+1}}v^{\top}H(\theta)v = \inf_{\theta\in\mathbb{R}^{d+1}}\sum_{i=1}^n\psi(y_i\bar{x}_i\cdot \theta)|\bar{x}_i\cdot v|^2 \geq \inf_{\theta\in\mathbb{R}^{d+1}}\frac{2}{\pi}\sum_{i=1}^n\mathds{1}_{(-\infty,0]}(y_i\bar{x}_i\cdot \theta)|\bar{x}_i\cdot v|^2 =: S_v.
\end{equation}
We have~$S_v\geq 
(2/\pi)\inf_{\theta\in\mathbb{R}^{d+1}}
\min_I\sum_{i\in I}|\bar{x}_i\cdot v|^2$,
where the minimum is over sets of indices~$I\subset[1,n]\cap\mathbb{N}$ of size~$|I|=\sum_{i=1}^n \mathds{1}_{(-\infty,0]}(y_i\bar{x}_i\cdot \theta)$. 
Since~$|I|\mapsto\min_{J:|J|=|I|}\sum_{i\in J}|\bar{x}_i\cdot v|^2$ is an increasing function, it holds that
\begin{equation}\label{dre}
S_v 
\geq
\frac{2}{\pi}\min_I\sum_{i\in I}|\bar{x}_i\cdot v|^2,
\end{equation}
where the minimum here is over~$I$ of size~$|I|=s^*$. 
Combining~\eqref{dre} with $L\leq \lambda_{\max}(\textstyle\sum_{i=1}^n\bar{x}_i\bar{x}_i^{\top})$, which follows directly from~\eqref{pro},
yields assertion~\ref{d2}. Assertion~\ref{d1} follows from~\ref{d0}, so it remains to prove~\ref{d0}. 
Recall that, for any~$u\in\mathbb{S}^d$, if $(\bar{x}_i)_{i\in I_u}$ spans~$\mathbb{R}^{d+1}$, then one has $\inf_{v\in\mathbb{S}^d}\sum_{i\in I_u}|\bar{x}_i\cdot v|^2>0$.
Thus, since the number of possible~$I_u$ over~$u\in\mathbb{S}^d$ is finite, the right-hand condition in~\eqref{d0eq} implies~$\inf_{u,v\in\mathbb{S}^d}\sum_{i\in I_u}|\bar{x}_i\cdot v|^2>0$. 
Subsequently, by~\eqref{sps}, this 
yields~$m\geq \inf_{v\in\mathbb{S}^d}S_v>0$. 
We have proved the right-to-left implication in~\eqref{d0eq}. 

For the other direction in~\eqref{d0eq}, suppose there exists $u^*\in\mathbb{S}^d$ 
with~$\textrm{Span}((\bar{x}_i)_{i\in I_{u^*}})\neq \mathbb{R}^{d+1}$. 
It follows that the Hessian~\eqref{probit} satisfies 
\begin{equation}\label{jq}
\inf_{c\in(0,\infty),v\in\mathbb{S}^d} \sum_{i=1}^n\psi(y_i(c\bar{x}_i\cdot u^*))|\bar{x}_i\cdot v|^2 \leq \inf_{v\in\mathbb{S}^d} \lim_{c\rightarrow\infty}\sum_{i=1}^n\psi(y_i(c\bar{x}_i\cdot u^*))|\bar{x}_i\cdot v|^2  \leq  \inf_{v\in\mathbb{S}^d}\sum_{i\in I_{u^*}}|\bar{x}_i\cdot v|^2.
\end{equation}
Since we can always choose~$v\in\mathbb{S}^d$ orthogonal to the span of~$(\bar{x}_i)_{i\in I_{u^*}}$, the right-hand side of~\eqref{jq} is zero. Therefore there can be no~$m>0$ with~$v^{\top}Hv\geq m$ for all~$v\in\mathbb{S}^d$.
\end{proof}

In the next Proposition~\ref{assp}, we verify the assumption of Theorem~\ref{aslp}\ref{d1} for~$\bar{x}_i = (1,x_i^{\top})^{\top}$ with~$x_i$ that are i.i.d. with an absolutely continuous distribution. The statement for~$\bar{x}_i=x_i$ follows similarly. 

\begin{proposition}\label{assp}
Assume~$(x_i)_{i=1}^n$ is an i.i.d. sequence of~$\mathbb{R}^d$-valued r.v.'s 
such that the distribution of~$x_i$ is absolutely continuous. 
It holds almost surely (a.s.) that~$\textrm{Span}(\{(1,x_i^{\top})^{\top}\}_{i\in I}) = \mathbb{R}^{d+1}$ for any set of indices~$I\subset\mathbb{N}\cap[1,n]$ with size~$|I|= d+1$. 
\end{proposition}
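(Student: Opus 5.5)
Since there are only finitely many index sets $I$ of size $d+1$, and a finite union of null events is null, it suffices to fix one such $I$, say $I=\{1,\dots,d+1\}$ after relabelling, and to show that the event
\[
\det\begin{pmatrix} 1 & \cdots & 1\\ x_1 & \cdots & x_{d+1}\end{pmatrix}=0
\]
has probability zero. The $(d+1)\times(d+1)$ matrix above is square, so its columns span $\mathbb{R}^{d+1}$ exactly when this determinant is nonzero. The columns $(1,x_i^{\top})^{\top}$ fail to span precisely when the points $x_1,\dots,x_{d+1}$ are affinely dependent, that is, when they lie in a common affine hyperplane of $\mathbb{R}^d$.

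I will argue by induction on $k$ that, almost surely, $(1,x_1^{\top})^{\top},\dots,(1,x_k^{\top})^{\top}$ are linearly independent for every $k\le d+1$.

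\emph{Base case} ($k=1$). This is trivial, because the first coordinate equals $1$.

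\emph{Inductive step.} Suppose the first $k<d+1$ vectors are linearly independent. Their span $V$ has dimension $k\le d$. The set
\[
A:=\{x\in\mathbb{R}^d:(1,x^{\top})^{\top}\in V\}
\]
is an affine subspace of $\mathbb{R}^d$ of dimension at most $k-1\le d-1$. Indeed, $A$ is the preimage of $V$ under an injective affine map into the hyperplane $\{z_0=1\}$, and $V$ meets that hyperplane in an affine set of dimension $k-1$.

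\emph{Key step.} Condition on $x_1,\dots,x_k$. By independence, $x_{k+1}$ keeps its absolutely continuous law. The set $A$ is a proper affine subspace, so it has Lebesgue measure zero, and hence $\mathbb{P}(x_{k+1}\in A\mid x_1,\dots,x_k)=0$. Integrating over $x_1,\dots,x_k$ by Fubini, using the product structure of the joint law, gives $\mathbb{P}(x_{k+1}\in A)=0$. So the first $k+1$ vectors are linearly independent almost surely, which completes the induction.

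The only point needing some care is this conditioning/Fubini step, where $A$ depends measurably on $x_1,\dots,x_k$. It can be sidestepped entirely. The joint law of $(x_1,\dots,x_{d+1})$ is absolutely continuous on $\mathbb{R}^{d(d+1)}$, since it is a product of absolutely continuous laws. The determinant is a polynomial in the entries, and it is not identically zero: taking $x_1=0$ and $x_{j+1}=e_j$ gives determinant $\pm1$. The zero set of a nonzero polynomial has Lebesgue measure zero, so the event that the determinant vanishes is null. This is the route I would actually write, with the nonzero-polynomial measure-zero fact as the single cited ingredient.
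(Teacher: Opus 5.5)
Your proposal is correct and takes the same route as the paper. Both arguments reduce to the almost-sure nonvanishing of the determinant of the $(d+1)\times(d+1)$ matrix with rows $(1,x_i^{\top})$, which is a nonzero polynomial with a Lebesgue-null zero set, combined with absolute continuity of the joint (product) law. Your explicit witness ($x_1=0$, $x_{j+1}=e_j$) and your finite union over the index sets $I$ fill in details the paper leaves implicit, and your inductive argument is a valid but unnecessary alternative.
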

\begin{proof}
It suffices to show that the matrix~$M$ with its~$i^{\textrm{th}}$ row equal to~$(1,x_i^{\top})$ is a.s.\ invertible. This is true if and only if its determinant is a.s.\ nonzero. Its determinant is~$\bar{P}((x_i)_{i\in I}) := P((x_{ji})_{i\in I,j\in[1,d]\cap\mathbb{N}})$ for a nonzero polynomial function~$P$. 
Therefore by~\cite{ct},~$P$ attains zero only on a Lebesgue null set. By the assumption on~$(x_i)_{i\in I}$, the assertion follows. 
\end{proof}

\subsection{Sufficient condition between~$n$ and~$d$}\label{nds}
The goal of the remainder of this section is to prove a quantitative lower bound for~$s^*$ given by~\eqref{psdef} w.h.p.\ under a Gaussian covariate assumption. Whether or not the lower bound is greater than~$d$ will depend only on~$(d+1)/n$. 
We work under the intercept
\begin{equation}\label{intercept}
~\bar{x}_i=(1,x_i^{\top})^{\top}
\end{equation}
case, with the following assumption.
\begin{assumption}\label{as1}
The covariates~$(\bar{x}_i)_{i=1}^n$ are given by~\eqref{intercept} with~$(x_i)_{i=1}^n$ that are i.i.d. r.v.'s such that~$x_i\sim N(0,I_d)$ for all~$i$. Moreover, the responses~$(y_i)_{i=1}^n$ are conditionally independent given~$(x_i)_{i=1}^n$, and there exists~$\bar{\beta}\in\mathbb{R}^{d+1}$ such that~$\frac{y_i + 1}{2}|x_i\sim \textrm{Bernoulli}(\Phi(\bar{x}_i\cdot\bar{\beta}))$ for all~$i$.
\end{assumption}
As in the introduction, under Assumption~\ref{as1}, we denote by~$\beta_0\in\mathbb{R}$ and~$\beta\in\mathbb{R}^d$ the elements of~$\bar{\beta}=(\beta_0,\beta^{\top})$. 
The inequality~$s^*>d$ in~\eqref{condsq} is satisfied under Assumption~\ref{as1} if and only if it is satisfied under the corresponding assumption with~$x_i\sim N(0,\Sigma)$ for all~$i$ and some positive definite~$\Sigma\in\mathbb{R}^{d\times d}$ (this follows easily from the argument in the beginning of~\cite[Section~3.1.1]{MR4065151}). 
Let~$\gamma_0\geq 0$ be such that~$\gamma_0^2=\textrm{Var}(x_i\cdot \beta)$ (for any~$i$). Under Assumption~\ref{as1}, by rotational invariance on the regression parameter space, we may assume  w.l.o.g.\ that~$\beta = (\gamma_0,0,\dots,0)$. 
Throughout, we assume that $\beta_0$ and $\gamma_0$ are independent of $n$ and $d$.

\begin{theorem}\label{mth}
Under Assumption~\ref{as1} and~$n> d$, for any~$\epsilon>0$, it holds with probability at least~$1-\exp(-2\epsilon^2n)$ that 
\begin{equation}\label{qrh}
s^* 
\geq n\bigg(h- \epsilon - \bigg(\frac{2\ln(en/(d+1))}{n/(d+1)}\bigg)^{\!1/2}\,\bigg),
\end{equation}
where~$s^*$ is given by~\eqref{psdef} and, with~$Z\sim N(0,1)$,~$h>0$ is given by
\begin{equation}\label{hdef}
h=\mathbb{E}[\Phi(-|\beta_0 + \gamma_0 Z|)].
\end{equation}
\end{theorem}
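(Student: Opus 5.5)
We need a lower bound on $s^* = \inf_u \sum_i \mathds{1}\{y_i\bar{x}_i\cdot u\le 0\}$ that holds with high probability. The plan is a uniform-convergence (VC) argument over halfspaces through the origin in $\mathbb{R}^{d+1}$, followed by a population computation.

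\textbf{Step 1: population bound.} Write $g(u) := \mathbb{P}(y\bar{x}\cdot u\le 0)$ for a generic sample $(\bar x,y)$. We claim $g(u)\ge h$ for every $u\in\mathbb{S}^d$. Condition on $x$ and set $t=\bar{x}\cdot u$, $a=\bar{x}\cdot\bar\beta$. Then
\[
\mathbb{P}(yt\le 0\mid x)=\Phi(a)\mathds{1}\{t\le0\}+(1-\Phi(a))\mathds{1}\{t\ge0\}\ge \min(\Phi(a),\Phi(-a))=\Phi(-|a|).
\]
Since $a=\beta_0+\gamma_0 x_1$ with $x_1\sim N(0,1)$, taking expectations gives $g(u)\ge \mathbb{E}[\Phi(-|\beta_0+\gamma_0Z|)]=h$. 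The quantity $h$ is positive because $\Phi>0$.

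\textbf{Step 2: uniform deviation.} Define $F=\inf_u \frac1n\sum_i \mathds{1}\{y_i\bar x_i\cdot u\le 0\}$, so that $s^*=nF$. Changing a single pair $(x_i,y_i)$ moves $F$ by at most $1/n$. McDiarmid's inequality then gives
\[
F\ge \mathbb{E}F-\epsilon \quad\text{with probability at least } 1-e^{-2\epsilon^2 n},
\]
which is exactly the stated probability. It remains to show
\[
\mathbb{E}F\ge h-\Big(\tfrac{2(d+1)\ln(en/(d+1))}{n}\Big)^{1/2}.
\]
By Step 1, $\mathbb{E}F\ge h-\mathbb{E}\sup_u\big(g(u)-\frac1n\sum_i \mathds{1}\{y_i\bar x_i\cdot u\le0\}\big)$. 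The functions $(\bar x,y)\mapsto \mathds{1}\{y\bar x\cdot u\le 0\}$ are indicators of homogeneous closed halfspaces in the variable $z=y\bar x\in\mathbb{R}^{d+1}$, and this class has VC dimension $d+1$.

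\textbf{Step 3: bounding the expected supremum.} The standard symmetrization plus Massart finite-class lemma gives
\[
\mathbb{E}\sup_u\Big|g(u)-\tfrac1n\textstyle\sum_i\cdots\Big|\le 2\,\mathbb{E}\,\mathrm{Rad}_n\le 2\sqrt{2\ln S(n)/n}.
\]
Sauer's lemma gives $S(n)\le (en/(d+1))^{d+1}$ when $n> d$. This yields the right form $\sqrt{2(d+1)\ln(en/(d+1))/n}$, but with an extra factor $2$.

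\textbf{Main obstacle.} The difficulty is matching the stated constant exactly, that is, removing the factor $2$ from symmetrization. For a one-sided bound I would try a sharper route that avoids it. One option is to use that the number of distinct sign patterns on the sample is at most $S(n)$, and work with ghost-sample-free concentration. Another is to cite a known one-sided bound of the form
\[
\mathbb{E}\sup_u(g-\hat g)\le \sqrt{2\ln S(n)/n},
\]
which can be obtained, for instance, via the maximal inequality applied after conditioning. If this cannot be made to work, I would accept the constant $2$ (or redistribute it between $\epsilon$ and the VC term) and check against the paper's intended argument.
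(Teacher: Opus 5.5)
Your architecture is the paper's: the population bound $\inf_u\mathbb{P}(y_1\bar x_1\cdot u\le 0)\ge h$, a bounded-differences inequality that produces the $\epsilon$ at probability $1-e^{-2\epsilon^2 n}$, and symmetrization with Massart and Sauer for homogeneous halfspaces in $\mathbb{R}^{d+1}$. The paper packages the first two ingredients as the Rademacher generalization bound of \cite{MR3931734}, Theorem~3.3, with $\delta=e^{-2\epsilon^2n}$. Your Step~1 conditions on all of $x$ and uses $\mathbb{P}(yt\le 0\mid x)\ge\min(\Phi(a),\Phi(-a))$. That is a tidier route to the same lower bound than the paper's computation, which first integrates out the coordinates of $x$ orthogonal to $\beta$.

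The gap is the one you flagged. As written, Step~3 gives the deviation term $2\bigl(2(d+1)\ln(en/(d+1))/n\bigr)^{1/2}$. That proves a strictly weaker inequality than \eqref{qrh}, so ``accepting the constant $2$'' does not prove the statement.

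The missing observation is that this factor comes from applying Massart's lemma to the uncentered $\{0,1\}$-valued pattern vectors, whose norms can be as large as $\sqrt n$. The fix has three steps.

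\textbf{One-sided average.} Use $\mathcal{R}_n(\mathcal{G})=\mathbb{E}\sup_u n^{-1}\sum_i\xi_i\mathds{1}\{y_i\bar x_i\cdot u\le 0\}$, with no absolute value. This suffices because you only need $\mathbb{E}\sup_u(g(u)-\hat g(u))\le 2\mathcal{R}_n(\mathcal{G})$.

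\textbf{Centering.} Write $\mathds{1}=\tfrac12+\tfrac12(2\mathds{1}-1)$ and use $\mathbb{E}\,\xi_i=0$. This gives $\mathcal{R}_n(\mathcal{G})=\tfrac12\mathcal{R}_n(\mathcal{G}')$ for the $\pm1$-valued class $\mathcal{G}'=\{2g-1:g\in\mathcal{G}\}$. Equivalently, apply Massart to the centered vectors $a-\tfrac12\mathbf{1}$, which have norm $\sqrt n/2$.

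\textbf{Massart and Sauer.} These apply since $n\ge d+1$, and give $\mathcal{R}_n(\mathcal{G})\le\tfrac12\bigl(2(d+1)\ln(en/(d+1))/n\bigr)^{1/2}$. Hence $2\mathcal{R}_n(\mathcal{G})$ is exactly the term in \eqref{qrh}.

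This is the identity $\mathcal{R}_n(\mathcal{G})=\mathcal{R}_n(\mathcal{G}')/2$ that the paper records before invoking Corollaries~3.8 and~3.18 of \cite{MR3931734}. Keep everything one-sided. With the absolute value in your Step~3 display, the constant shift is no longer free and leaves an extra $\mathbb{E}|\sum_i\xi_i|/(2n)$ term.

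Finally, McDiarmid requires $F$ to be measurable. This follows by restricting the infimum to rational $u$, as in Remark~\ref{imr}\ref{imr2}.
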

\begin{remark}\label{imr}
\begin{enumerate}[label=(\roman*)]
\item 
In the null case~$\beta_0=\gamma_0=0$, we have~$h=1/2$. By definition of~$\Phi$, we have~$h\leq 1/2$ for all~$\beta_0,\gamma_0$. 
\item \label{imr2} We note that the infimum in the definition~\eqref{psdef} of~$s^*$ is measurable w.r.t. the probability space, because we may write~$\mathds{1}_{(-\infty,0]} = 1-\mathds{1}_{(0,\infty)}$, which leads to an infimum over (all) rational vectors.
\end{enumerate}
\end{remark}
\begin{proof}
Let~$\mathcal{G}$ be the set of functions acting on~$\mathbb{R}^{d+1}$ given by
\begin{equation}\label{halfs}
\mathcal{G} = \{ \mathbb{R}^{d+1} \ni z\mapsto \mathds{1}_{(-\infty,0]}(z\cdot u) : u\in\mathbb{S}^d\}
\end{equation}
and let~$\mathcal{G}' = \{2g-1:g\in\mathcal{G}\}$. We denote by~$\mathcal{R}_n(\mathcal{G}), \mathcal{R}_n(\mathcal{G'})$ the Rademacher complexities~\cite[Definition~3.2]{MR3931734} of~$\mathcal{G},\mathcal{G}'$ respectively with respect to the distribution of~$y_1\bar{x}_1$ in $\mathbb{R}^{d+1}$. By definition, we have~$\mathcal{R}_n(\mathcal{G}) = \mathcal{R}_n(\mathcal{G}')/2$. 
By Theorem~3.3 in~\cite{MR3931734}, for any~$\delta>0$, it holds with probability at least~$1-\delta$ that 
\begin{equation}\label{jao}
n^{-1}s^*
\geq \inf_{u\in\mathbb{S}^d}\mathbb{P}(y_1\bar{x}_1\cdot u\leq 0) -2\mathcal{R}_n(\mathcal{G}) - \bigg(\frac{\ln(1/\delta)}{2n}\bigg)^{1/2}.
\end{equation}
By Corollary~3.8 and Corollary~3.18 both again in~\cite{MR3931734}, and using that the VC-dimension of the class of homogeneous halfspaces in~$\mathbb{R}^{d+1}$ is~$d+1$ \cite{Sauer1972,Shelah1972}, 
we have
\begin{equation}\label{jak}
\mathcal{R}_n(\mathcal{G}) \leq \frac{1}{2}\mathcal{R}_n(\mathcal{G}') \leq \frac{1}{2}\bigg(\frac{2(d+1)\ln(en/(d+1))}{n}\bigg)^{1/2}.
\end{equation}
Moreover, using that the signal comes only from the first coordinate (namely~$\beta=(\gamma_0,0,\dots,0)$) and that $\bar{x}_1=(1,x_1^{\top})^{\top}$, it holds for any~$u=(u_0,\dots,u_d)\in\mathbb{S}^d$ that
\begin{align*}
&\mathbb{P}(y_1\bar{x}_1\cdot u\leq 0)\\
&\quad= 
\mathbb{E}[\mathds{1}_{\{\bar{x}_1\cdot u\leq 0\}}\mathds{1}_{\{y_1=1\}}] 
+ 
\mathbb{E}[\mathds{1}_{\{\bar{x}_1\cdot u\geq 0\}}\mathds{1}_{\{y_1=-1\}}]\\
&\quad=\mathbb{E}[\mathds{1}_{\{u_0 + u_1x_{11} + |(u_2,\dots,u_d)|Z\leq 0 \}}\Phi(\beta_0 + \gamma_0x_{11})] + \mathbb{E}[\mathds{1}_{\{u_0 + u_1x_{11} + |(u_2,\dots,u_d)|Z\geq 0 \}}\Phi(-\beta_0 - \gamma_0x_{11})],
\end{align*}
where~$Z\sim N(0,1)$ is independent of~$x_1,y_1$. This implies
\begin{align*}
\mathbb{P}(y_1\bar{x}_1\cdot u\leq 0)
&=
\mathbb{E}[\Phi(-(u_0 + u_1x_{11})/|(u_2,\dots,u_d)|)\Phi(\beta_0 + \gamma_0x_{11})]\\
&\quad+ \mathbb{E}[\Phi((u_0 + u_1x_{11})/|(u_2,\dots,u_d)|)\Phi(-\beta_0 - \gamma_0x_{11})],
\end{align*}
where~$\Phi(+\infty):=1$ and~$\Phi(-\infty):=0$. 
Further, we have
\begin{align}
\mathbb{P}(y_1\bar{x}_1\cdot u\leq 0)\nonumber
&=\mathbb{E}[(1-\Phi((u_0 + u_1x_{11})/|(u_2,\dots,u_d)|))\Phi(\beta_0 + \gamma_0x_{11})]\nonumber\\
&\quad+ \mathbb{E}[\Phi((u_0 + u_1x_{11})/|(u_2,\dots,u_d)|)(1-\Phi(\beta_0 + \gamma_0x_{11}))],\nonumber\\
&=\mathbb{E}[\Phi(\beta_0 + \gamma_0x_{11})+ \Phi((u_0 + u_1x_{11})/|(u_2,\dots,u_d)|)(1-2\Phi(\beta_0 + \gamma_0x_{11}))]\nonumber\\
&\geq \mathbb{E}[\Phi(\beta_0 + \gamma_0x_{11})+\mathds{1}_{[0,\infty)}(\beta_0 + \gamma_0x_{11})(1-2\Phi(\beta_0 + \gamma_0x_{11}))]\nonumber\\
&= \mathbb{E}[\Phi(-|\beta_0 + \gamma_0x_{11}|)],\label{thg}
\end{align}
where we note that the infimum over~$u$ of the left and right-hand sides of~\eqref{thg} are in fact equal, but we omit the proof. 
Substituting this and~\eqref{jak} into~\eqref{jao}, then taking~$\delta = e^{-2\epsilon^2 n}$ concludes the proof.
\end{proof}
In~\eqref{qrh}, near~$\epsilon=0$, there always exists~$s\in(0,1)$ such that~$(d+1)/n\leq s$ implies that the right-hand side of~\eqref{qrh} is greater than~$d$. Together with Theorem~\ref{aslp} and Proposition~\ref{assp}, we obtain a condition on~$(d+1)/n$ under which the condition number is finite w.h.p., as formalized in the following Corollary~\ref{cor1}. 
\begin{corollary}\label{cor1}
Under Assumption~\ref{as1}, 
there exists~$r\in(0,1)$ such that for any~$\bar{r}\in(0,r)$, if~$(d+1)/n\leq \bar{r}$, then the condition number is finite with probability at least~$1-\exp(-cn)$ for some constant~$c>0$ independent of~$d,n$. Moreover,~$r$ is the solution to
\begin{equation}\label{maco}
r+(2r\ln(e/r))^{1/2} = h,
\end{equation}
where~$h$ is given by~\eqref{hdef} with~$Z\sim N(0,1)$.
\end{corollary}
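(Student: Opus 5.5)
We combine Theorem~\ref{mth} with Theorem~\ref{aslp}\ref{d1} and Proposition~\ref{assp}. By Proposition~\ref{assp}, under Assumption~\ref{as1} the vectors $(\bar{x}_i)_{i\in I}$ almost surely span $\mathbb{R}^{d+1}$ for every $I$ with $|I|=d+1$, hence for every $|I|>d$. So Theorem~\ref{aslp}\ref{d1} applies almost surely, and the event $\{L/m<\infty\}$ coincides a.s.\ with $\{s^*>d\}$. It therefore suffices to show that $\mathbb{P}(s^*\le d)\le \exp(-cn)$ whenever $(d+1)/n\le\bar r<r$.

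First we handle the scalar equation~\eqref{maco}. Let $g(t):=t+(2t\ln(e/t))^{1/2}$ for $t\in(0,1]$. We have $g(t)\to0$ as $t\downarrow0$, and $g$ is continuous and strictly increasing on $(0,1]$, since $t\ln(e/t)$ has derivative $\ln(1/t)\ge0$. Also $g(1)=1+\sqrt2>h$, because $h\le1/2$ by Remark~\ref{imr}, and $h>0$. Hence there is a unique $r\in(0,1)$ with $g(r)=h$, and $g(\bar r)<h$ for every $\bar r<r$.

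Now fix $\bar r\in(0,r)$ and write $t=(d+1)/n\le\bar r$. Using monotonicity of $g$, the bound~\eqref{qrh} gives, with probability at least $1-\exp(-2\epsilon^2 n)$,
\[
s^*\ge n\big(h-\epsilon-(2t\ln(e/t))^{1/2}\big)= n\big(h-\epsilon-g(t)+t\big)\ge n\big(h-\epsilon-g(\bar r)\big)+(d+1).
\]
Choose $\epsilon:=(h-g(\bar r))/2>0$, which depends only on $\beta_0,\gamma_0,\bar r$. Then $s^*\ge n\epsilon+d+1>d$. Consequently $L/m<\infty$ with probability at least $1-\exp(-cn)$, where $c=2\epsilon^2=(h-g(\bar r))^2/2$ is independent of $n,d$.

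The only delicate point is the bookkeeping. One must check that Theorem~\ref{mth} requires only $n>d$, which holds since $t<1$. One must also write the correction term so that the "$+(d+1)$" produced by the $t$ in $g(t)$ exactly covers the threshold $d$ in~\eqref{condsq}. This is why~\eqref{maco} carries the extra summand $r$. Intersecting with the almost-sure event of Proposition~\ref{assp} does not change the probability bound.
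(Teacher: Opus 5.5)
Your proof is correct and follows the paper's own route. The paper only sketches the argument: the right-hand side of \eqref{qrh} exceeds $d$ once $(d+1)/n$ and $\epsilon$ are small, and then Theorem~\ref{aslp}\ref{d1} and Proposition~\ref{assp} give the conclusion. Your write-up fills in exactly the omitted details: uniqueness of $r$ via monotonicity of $t\mapsto t+(2t\ln(e/t))^{1/2}$, the choice $\epsilon=(h-g(\bar r))/2$, the check $n>d$, and the observation that the summand $t=(d+1)/n$ absorbs the threshold $d$.
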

The condition~\eqref{maco} is numerically tractable given~$\beta_0,\gamma_0$, in the same way as in~\cite{MR4065151}, but we don't claim that~$r$ dictated by~\eqref{maco} is the optimal such~$r$ (which is the case in~\cite{MR4065151}).

\section{Asymptotically constant condition number}\label{sec:prop_regime}

In this section, we show under Assumption~\ref{as1} that the condition number is bounded above w.h.p. as~$n,d\rightarrow\infty$ for~$r=\lim_{n,d\rightarrow\infty}d/n$ small enough, by a constant independent of~$n,d,r$. 
Of course, a non-trivial upper bound is only possible in the regime on~$r$ from Section~\ref{nds} where a finite condition number exists. 
Indeed, our Proposition~\ref{map} will necessitate a stronger condition~\eqref{sal}, compared to that obtained with~\eqref{condsq} and~\eqref{qrh} above for the existence of a finite condition number. Throughout, we assume that the probability space is complete.

First we state a slight modification of a result from~\cite{MR3612870}. In the following, we use the definition of VC dimension in~\cite[Definition~2.2]{MR3612870}.

\begin{lemma}[A variant of Lemma~2.3 in~\cite{MR3612870}]\label{lmlem}
Let $X_1,\dots,X_n$ be i.i.d. copies of an~$\mathbb{R}^{d+1}$-valued random variable~$X$, and let~$\mathcal{F}$ be a class of measurable real-valued functions. Fix~$c\in (0,1)$,~$\beta'\in(0,1]$, and~$u \geq 0$, and suppose that
\begin{equation}\label{gea}
\inf_{f\in\mathcal{F}} \mathbb{P}(|f(X)|>u)\geq \beta'.
\end{equation}
Set $\mathcal{G}_u=\{\mathds{1}_{\{|f|>u\}}:f\in\mathcal{F}\}$, and assume
that $\mathcal{G}_u$ 
has VC-dimension at most $d'\in\mathbb{N}$. 
Assume moreover that~$
\sup_{g\in\mathcal{G}_u}|n^{-1}\sum_{i=1}^n g(X_i)-\mathbb{E}[g(X)]|$ and~$
\inf_{g\in\mathcal{G}_u}\sum_{i=1}^n g(X_i)$ are measurable (w.r.t. the probability space). 
Then there exist absolute constants $C_0,c_0>0$ such that, if $n\geq C_0d'/(c\beta')^2$, then, with probability at least $1-\exp(-c_0(c\beta')^2n)$,
\[\inf_{f\in\mathcal{F}}\big|\{1\leq i\leq n:|f(X_i)|>u\}\big|
\geq(1-c)\beta'n.\]

\end{lemma}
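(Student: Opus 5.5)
This is a uniform-deviation argument applied to the indicator class $\mathcal{G}_u$, following the proof of Lemma~2.3 in~\cite{MR3612870} essentially verbatim. For each $f\in\mathcal{F}$, the quantity $\big|\{i:|f(X_i)|>u\}\big|$ equals $\sum_{i=1}^n g(X_i)$ with $g=\mathds{1}_{\{|f|>u\}}\in\mathcal{G}_u$. Hence
\[
\inf_{f\in\mathcal{F}}\big|\{i:|f(X_i)|>u\}\big| \geq n\inf_{g\in\mathcal{G}_u}\mathbb{E}[g(X)] - n\sup_{g\in\mathcal{G}_u}\Big|\frac1n\sum_{i=1}^n g(X_i)-\mathbb{E}[g(X)]\Big|.
\]
By~\eqref{gea}, the first term is at least $\beta' n$. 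It therefore suffices to show that the supremum of the empirical process, call it $W$, is at most $c\beta'$ with the stated probability. The measurability assumptions in the statement are exactly what make $W$ and the left-hand infimum random variables, so the probabilistic statements are meaningful; this is the only genuine modification compared with~\cite{MR3612870}.

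To control $W$ I will use two steps. First, by symmetrization and the VC bound on Rademacher complexity (Corollaries~3.8 and~3.18 in~\cite{MR3931734}, as used in the proof of Theorem~\ref{mth}), we get $\mathbb{E}W\leq 2\mathcal{R}_n(\mathcal{G}_u)\leq C_1\sqrt{d'\ln(en/d')/n}$. This bound carries a logarithm. To avoid the log, I would instead invoke the chaining/Dudley bound with Haussler's packing estimate for VC classes, which gives $\mathbb{E}W\leq C_1\sqrt{d'/n}$ with an absolute constant $C_1$. This is the form used in~\cite{MR3612870}. Second, since each $g$ takes values in $[0,1]$, changing one sample changes $W$ by at most $1/n$. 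McDiarmid's bounded-differences inequality then gives $W\leq \mathbb{E}W+t$ with probability at least $1-\exp(-2nt^2)$.

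Choose $t=c\beta'/2$. Require $C_1\sqrt{d'/n}\leq c\beta'/2$, which holds once $n\geq C_0 d'/(c\beta')^2$ with $C_0=4C_1^2$. Then $W\leq c\beta'$ with probability at least $1-\exp(-(c\beta')^2 n/2)$, so we may take $c_0=1/2$. Combining this with the decomposition above gives the claim.

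The main obstacle is obtaining a log-free bound on $\mathbb{E}W$, which is what makes the sample-size condition $n\gtrsim d'/(c\beta')^2$ clean. If I wanted to avoid citing chaining, I would accept the logarithmic bound from the Rademacher step instead. That version requires $n\geq C_0 (d'/(c\beta')^2)\ln(e/(c\beta'))$, which can be reabsorbed only if the constants are allowed to depend on $c\beta'$. So I would cite the standard bound $\mathbb{E}\sup_{g\in\mathcal{G}}|P_ng-Pg|\leq C\sqrt{d'/n}$ for VC classes of VC-dimension $d'$ (e.g.\ via Dudley's entropy integral together with Haussler's bound $N(\epsilon)\leq (C/\epsilon)^{2d'}$ on the $L_2(P_n)$ covering numbers).
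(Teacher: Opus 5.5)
Your proof is correct and is essentially the paper's argument: like the paper, you follow the proof of Lemma~2.3 in~\cite{MR3612870}, combining the log-free VC bound $\mathbb{E}W\le C_1\sqrt{d'/n}$ with bounded-differences concentration, and you make the same two changes the paper makes, namely requiring $\mathbb{E}W\le c\beta'/2$ (which gives $n\ge C_0d'/(c\beta')^2$) and setting the deviation level to $c\beta'/2$. Your explicit McDiarmid constants $C_0=4C_1^2$ and $c_0=1/2$ just replace the paper's $C_0=(2c_2)^2$ and $c_0=(2c_1)^{-2}$; note, however, that what distinguishes this variant from the original lemma is this rescaling of the threshold (from $\beta'/2$ to $(1-c)\beta'$), not only the added measurability assumption.
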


\begin{proof}
We follow the proof of \cite[Lemma~2.3]{MR3612870}, adopting its notation and denoting by $c_1,c_2>0$ the absolute constants appearing there. Only two modifications are needed. First, in place of~(2.3) therein, we require $\mathbb{E}[G(X_1,\dots,X_n)]\leq c_2\sqrt{d'/n}\leq c \beta'/2$ which holds under the assumption $n \geq C_0d'/(c\beta')^2$ upon setting $C_0:=(2c_2)^2$. 
Second, in place of~$t=n(\beta')^2/(16c_1^2)$, we take~$t=n(c\beta')^2/(2c_1)^2$. The resulting deviation term is $c\beta'/2$, and hence, outside an
event of probability at most $e^{-t} =\exp\left\{-n(c\beta')^2/(2c_1)^2\right\}, $ we have $G(X_1,\dots,X_n)\leq c\beta'$. Consequently, simultaneously
for every $f\in\mathcal{F}$,
$n^{-1}\left|\left\{1\leq i\leq n:|f(X_i)|>u\right\}
\right|\geq \mathbb{P}(|f(X)|>u)-c\beta'\geq(1-c)\beta',$
where the last inequality follows from~\eqref{gea}. The assertion follows with $c_0=(2c_1)^{-2}$.
\end{proof}

To simplify the presentation and arguments, we restrict to Assumption~\ref{as1} in the sequel. 
However, we note that granting Assumption~\ref{as1} in this case does lose generality from the~$x_i\sim N(0,\Sigma)$ case for arbitrary~$\Sigma$, unlike in Section~\ref{exs}. 

\begin{proposition}\label{map}
Let Assumption~\ref{as1} hold.
Let~$h$ be given by~\eqref{hdef}. Then there exist constants $C_h\in(0,1)$ and $\kappa^*<\infty$, depending only on $h$, such that, for every fixed $r\in[0,C_h]$,
\begin{equation*}
\mathbb{P}(L/m> \kappa^*)
\rightarrow 0
\end{equation*}
as~$n,d\rightarrow \infty$ with~$d/n \rightarrow r$. 
\end{proposition}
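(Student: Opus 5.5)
The plan is to bound $L$ from above and $m$ from below separately, both with high probability, and to use Theorem~\ref{aslp}\ref{d2} as the bridge:
\[
L/m\leq \frac{\pi}{2}\,\frac{\lambda_{\max}\big(\sum_{i=1}^n\bar{x}_i\bar{x}_i^{\top}\big)}{P(s^*)}.
\]

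\textbf{Upper bound on the numerator.} Write $\sum_i\bar{x}_i\bar{x}_i^{\top}=X^{\top}X$, where $X$ is the $n\times(d+1)$ matrix with rows $(1,x_i^{\top})$. The block structure and standard Gaussian operator-norm bounds (e.g.\ Davidson--Szarek) give
\[
\lambda_{\max}(X^{\top}X)\leq n\big(1+\sqrt{r}+o(1)\big)^2+O(n)
\]
with probability tending to one. Concretely, $\|X\|\leq \sqrt n+\|[x_1,\dots,x_n]^{\top}\|\leq \sqrt n+\sqrt n+\sqrt{d}+o(\sqrt n)$. Since $r\leq C_h<1$, this yields $\lambda_{\max}\leq 9n$ w.h.p.

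\textbf{Lower bound on $s^*$.} By Theorem~\ref{mth} with a small fixed $\epsilon$ (say $\epsilon=h/4$), w.h.p.
\[
s^*\geq n\big(h-\epsilon-(2r'\ln(e/r'))^{1/2}\big),\qquad r'=(d+1)/n\to r.
\]
Choose $C_h$ small enough that $(2r\ln(e/r))^{1/2}\leq h/4$ for all $r\leq C_h$ (with the convention that this is $0$ at $r=0$). Then $s^*\geq hn/2$ w.h.p. Since $P$ is nondecreasing in $s$, it suffices to lower bound $P(\lceil hn/2\rceil)$.

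\textbf{Lower bound on $P$.} This is the main step: we need
\[
\inf_{v\in\mathbb{S}^d}\ \min_{|I|=\lceil hn/2\rceil}\ \sum_{i\in I}|\bar{x}_i\cdot v|^2\geq c_h n.
\]
I would apply Lemma~\ref{lmlem} with $X=\bar{x}_1$ and $\mathcal{F}=\{z\mapsto z\cdot v: v\in\mathbb{S}^d\}$. The class $\mathcal{G}_u$ consists of indicators of complements of slabs $\{|z\cdot v|\leq u\}$, i.e.\ intersections of two halfspaces. Its VC dimension is therefore $O(d)$, say at most $C(d+1)$. The required measurability follows as in Remark~\ref{imr}\ref{imr2} by restricting to rational $v$.

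For \eqref{gea}, note that $\bar{x}_1\cdot v=v_0+v'\cdot x_1\sim N(v_0,|v'|^2)$ with $v_0^2+|v'|^2=1$. Anti-concentration then gives $\mathbb{P}(|\bar{x}_1\cdot v|>u)\geq \beta'(u)$ uniformly in $v$, with $\beta'(u)\to 1$ as $u\to0$:
\begin{itemize}
\item if $|v_0|\geq 1/2$ and $u<1/4$, the probability is at least $1/2$;
\item otherwise $|v'|\geq \sqrt3/2$, and the probability is at least $1-2u/(|v'|\sqrt{2\pi})$.
\end{itemize}
Pick $u$ so that $\beta'\geq 1-h/4$, and take $c$ small enough that $(1-c)\beta'\geq 1-h/4-c$. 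Lemma~\ref{lmlem} then gives, with probability $1-e^{-c_0(c\beta')^2n}$ and uniformly in $v$, at least $(1-h/4-c)n$ indices with $|\bar{x}_i\cdot v|>u$. This requires $n\geq C_0C(d+1)/(c\beta')^2$, which is where $C_h\leq (c\beta')^2/(2C_0C)$ enters the definition of $C_h$.

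Any $I$ with $|I|=\lceil hn/2\rceil$ then contains at least $(h/2-h/4-c)n\geq hn/8$ such indices, once $c\leq h/8$. Hence $P(\lceil hn/2\rceil)\geq u^2hn/8$.

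\textbf{Conclusion.} Combining the three bounds gives
\[
L/m\leq \frac{\pi}{2}\cdot\frac{9n}{u^2hn/8}=:\kappa^*,
\]
which depends only on $h$, with probability tending to one. I expect the main obstacle to be the uniform anti-concentration and VC bookkeeping in the lower bound on $P$, in particular the handling of the intercept coordinate and the need for a constant-fraction-of-$n$ guarantee uniform over all $v$. The constants $C_0$ and $C$ from Lemma~\ref{lmlem} and the VC bound make $C_h$ small but still depending only on $h$.
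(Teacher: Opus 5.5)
Your route is the same as the paper's. You bound $L/m$ via Theorem~\ref{aslp}\ref{d2}, use Theorem~\ref{mth} to get $s^*\geq hn/2$, apply Lemma~\ref{lmlem} as a uniform small-ball bound to control $P(s^*)$, and use a Gaussian operator-norm bound for $\lambda_{\max}$. Your slab/VC count and Davidson--Szarek step are fine substitutes for the paper's citations.

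\textbf{The gap: the anti-concentration step.} Your first bullet ($|v_0|\geq 1/2$) only gives $\mathbb{P}(|\bar{x}_1\cdot v|>u)\geq 1/2$. That does not give $\beta'(u)\to 1$, and nothing that weak will do. The pigeonhole step needs the fraction of indices with $|\bar{x}_i\cdot v|\leq u$, which is at most $1-(1-c)\beta'$, to be smaller than $|I|/n\approx h/2\leq 1/4$. With $\beta'=1/2$, a set $I$ of size $\lceil hn/2\rceil$ could consist entirely of such indices, and you would only get $P(\lceil hn/2\rceil)\geq 0$.

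The repair is short. If $|v_0|\geq 1/2$ and $u<1/4$, the interval $[-u,u]$ lies at distance at least $1/4$ from the mean of $\bar{x}_1\cdot v\sim N(v_0,s^2)$, where $s=|v'|\in[0,1]$. Hence
\[
\mathbb{P}(|\bar{x}_1\cdot v|\leq u)\leq 2u\sup_{s\in(0,1]}s^{-1}\phi\big(1/(4s)\big)=O(u),
\]
and the probability is $0$ when $s=0$. Together with your second bullet this gives $\inf_{v}\mathbb{P}(|\bar{x}_1\cdot v|>u)\geq 1-Ku$ for an absolute constant $K$. So $\beta'\geq 1-h/4$ holds for some $u$ depending only on $h$. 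The paper reaches the same conclusion by compactness: $F(a,t)=\mathbb{P}(|a+(1-a^2)^{1/2}Z|\leq t)$ is continuous on $[-1,1]\times[0,1/2]$ with $F(\cdot,0)=0$, so $\sup_a F(a,t)\to 0$ as $t\downarrow 0$.

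\textbf{Two smaller points.}
\begin{enumerate}[label=(\roman*)]
\item Restricting to rational $v$ ``as in Remark~\ref{imr}\ref{imr2}'' does not transfer verbatim. There the count is upper semicontinuous in the direction. Here $v\mapsto\sum_i\mathds{1}_{\{|\bar{x}_i\cdot v|>u\}}$ is lower semicontinuous, so its infimum over a dense set need not equal the true infimum without an extra genericity argument. The paper instead invokes projection/universal measurability on the completed space.
\item If $C_h$ ends up being exactly the point where $(2r\ln(e/r))^{1/2}=h/4$, then at $r=C_h$, with $\epsilon=h/4$, Theorem~\ref{mth} only gives $s^*\geq n(h/2-o(1))$, because $(d+1)/n$ can exceed $r$. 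Taking $\epsilon=h/8$ restores the slack.
\end{enumerate}
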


\begin{proof}
Let~$\mathcal{F}$ be the set of functions acting on~$\mathbb{R}^{d+1}$ given by
\begin{equation}\label{norms}
\mathcal{F} = \{ \mathbb{R}^{d+1} \ni x\mapsto |v\cdot x|^2 : v\in\mathbb{S}^d\}.
\end{equation}
Let~$\epsilon'=h/12$; recall from \eqref{hdef} that $h\in(0,1/2]$. For
$v=(v_0,\widetilde v)\in\mathbb{S}^d$, rotational invariance gives $v\cdot\bar{x}_1 \stackrel{d}{=} v_0+(1-v_0^2)^{1/2}Z,$ with $Z\sim N(0,1).$ The map~$F(a,t) := \mathbb{P}( |a+(1-a^2)^{1/2}Z|\leq t)$ is continuous on~$[-1,1]\times[0,1/2]$ and satisfies $F(a,0)=0$. Compactness therefore gives~$\sup_{a\in[-1,1]}F(a,t)\rightarrow 0 $ as $t \downarrow 0$. Then, there exists $t_h\in(0,1/2]$, depending only on~$h$ such that~$\sup_aF(a,t_h)\leq\epsilon'$. Setting $\delta':=t_h^2$, we obtain, for every $d$, 
\begin{equation}\label{eq:uniform-small-ball}
\inf_{v\in\mathbb{S}^d} \mathbb{P}\left(|v\cdot\bar{x}_1|^2 >\delta'\right) \geq 1-\epsilon',
\end{equation}
which is~\eqref{gea} with~$\mathcal{F}$ given by~\eqref{norms},~$X_i=\bar{x}_i = (1,x_i^{\top})^{\top}$,~$u=\delta'$ and~$\beta'=1-\epsilon'$.

We define~$C_h'>0$ to be a constant such that if~$r\in(0,C_h']$, then~$(2r\ln(e/r))^{1/2}\leq h/2$.
Let~$r\in[0,C_h']$. 
Let~$c^*\in[h/6,h/3]\subset (0,1/6]$ be given by
\begin{equation}\label{csdef}
c^*=\begin{cases}
(h-(2r\ln(e/r))^{1/2})/3 &\textrm{if }r\neq 0\\
h/3 &\textrm{if }r= 0.
\end{cases}
\end{equation}
Since~$\epsilon'=h/12$ satisfies~$(\frac{1}{1-\epsilon'}-1)(\frac{3}{2(1-\epsilon')}-1)^{-1} = 2\epsilon'/(1+2\epsilon')< 2\epsilon' \leq c^*$, we have
\begin{equation}\label{cr2}
c^*<1-\frac{1}{1-\epsilon'} + \frac{3c^*}{2(1-\epsilon')}.
\end{equation}

With these quantities and having obtained~\eqref{eq:uniform-small-ball}, we apply Lemma~\ref{lmlem} with~$u=\delta'
$,~$\beta'=1-\epsilon'$,~$c=c^*$, where we use Lemma~6.1 in~\cite{chak2025c} for the requirement on the VC-dimension. Note that~$\mathcal{F}$ as in~\eqref{norms} is not the same as the corresponding set in~\cite[Lemma~6.1]{chak2025c}, but it is a smaller set, so the VC-dimension is controlled by the same estimate. Note also that the measurability of~$
\sup_{v\in\mathbb{S}^d}|n^{-1}\sum_{i=1}^n \mathds{1}_{\{|X_i\cdot v|^2>u\}} - \mathbb{P}(|X\cdot v|^2>u)|$ and~$
\inf_{v\in\mathbb{S}^d}\sum_{i=1}^n \mathds{1}_{\{|X_i\cdot v|^2>u\}}$ (as required in Lemma~\ref{lmlem}) hold by the following. Let~$f^{\dagger}:\Omega\times \mathbb{S}^d\rightarrow\mathbb{R}$ be given by~$f^{\dagger}(\omega,v)=|n^{-1}\sum_{i=1}^n \mathds{1}_{\{|X_i\cdot v|^2>u\}} - \mathbb{P}(|X\cdot v|^2>u)|$. The function~$f^{\dagger}$ is measurable, and so the set~$\{f^{\dagger}>a\}$ is measurable for any~$a\in\mathbb{R}$. By a standard projection measurability result (Proposition~8.4.4 in~\cite{MR3098996}), we have that~$\{\sup_{v\in\mathbb{S}^d}f^{\dagger}(\cdot,v)>a\}$, which is the projection of~$\{f^{\dagger}>a\}$ onto~$\Omega$, is universally measurable as required (by completeness of the probability space). Similar arguments apply for~$\inf_{v\in\mathbb{S}^d}\sum_i\mathds{1}_{\{|X_i\cdot v|^2>u\}}$. 

This application of Lemma~\ref{lmlem} yields that there exist absolute constants~$c_1,c_2>0$ such that if
\begin{equation}\label{saj}
n\geq c_1(d+1)/(c^*(1-\epsilon'))^2
\end{equation}
holds, then it holds with probability at least~$1-\exp(-c_2(c^*)^2n)$ that 
\begin{align}
\inf_{v\in\mathbb{S}^d}|\{i\in\mathbb{N}\cap[1,n]:|v\cdot \bar{x}_i|^2>
\delta'\}| &= \inf_{f\in\mathcal{F}}|\{i\in\mathbb{N}\cap[1,n]:|f(\bar{x}_i)|>
\delta'\}|\nonumber\\
&\geq (1-c^*)(1-\epsilon')n. \label{lec}
\end{align}
Note that for~$r\neq0$, there exists an absolute constant~$C>0$ such that for large enough~$n,d$ with~$\lim_{n,d\to\infty} d/n=r$, the condition
\begin{equation}\label{sal}
h-(2r\ln(e/r))^{1/2}>Cr^{1/2}
\end{equation}
is sufficient for~\eqref{saj}, because~\eqref{sal} and~\eqref{csdef} imply~$((d+1)/n)^{1/2}\leq 2r^{1/2}< 6c^*/C$ for all sufficiently large $n$ and $d$ with~$\lim_{n,d\to\infty} d/n=r\neq 0$, which implies~$n\geq C^2(d+1)/(6c^*)^2\geq C^2(d+1)/(12c^*(1-\epsilon'))^2$. 
Moreover, note that by the monotonicity of~$[0,1]\ni r\mapsto r\ln(e/r)$, there exists~$C_h''\in(0,1)$ such that~$r\in(0,C_h'']$ implies~\eqref{sal}. 
For~$r=0$,~\eqref{saj} is automatically valid in this asymptotic regime. We define~$C_h$ (in the assertion) to be~$C_h=\min(C_h',C_h'')$.

Inequality~\eqref{lec} with~\eqref{cr2} implies 
\begin{equation}\label{lec2}
\inf_{v\in\mathbb{S}^d}|\{i\in\mathbb{N}\cap[1,n]:|v\cdot \bar{x}_i|^2>
\delta'\}| \geq n-3nc^*/2.
\end{equation}
Moreover, Theorem~\ref{mth} with~$\epsilon=
3c^*/4$ implies for~$s^*$ given by~\eqref{psdef} and large enough~$n,d$ that it holds with probability at least~$1-\exp(-2\epsilon^2n)$ that
\begin{equation}\label{ndp}
s^*\geq 2nc^*.
\end{equation}
Combining this with the event that~\eqref{lec2} holds, for large enough~$n,d$ with~$\lim d/n=r\in[0,C_h]$, 
it holds with probability at least~$1-\exp(-2\epsilon^2n) - \exp(-c_2(c^*)^2n)$ that
\begin{equation}\label{psbo}
P(s^*)\geq P(\lfloor2nc^*\rfloor) \geq \delta'\cdot 3nc^*/7 \geq \delta'\cdot (n/7)\cdot (h/2),
\end{equation}
where~$P$ is given by~\eqref{psdef}.

On the other hand, since for any~$v=(v_0,v_1,\dots,v_d)\in\mathbb{S}^d$ and denoting~$\bar{v}=(v_1,\dots,v_d)$ we have
\begin{equation*}
v^{\top}\bigg(n^{-1}\sum_{i=1}^n \bar{x}_i\bar{x}_i^{\top}\bigg)v = n^{-1}\sum_{i=1}^n(v_0 + \bar{v}\cdot x_i)^2 
\leq 2v_0^2 + 2|\bar{v}|^2\cdot \lambda_{\max}\bigg(n^{-1}\sum_{i=1}^n x_ix_i^{\top}\bigg),
\end{equation*}
by Theorem~6.1 in~\cite{MR3967104}, it holds for any~$\epsilon''>0$ that
\begin{equation*}
\mathbb{P}\bigg(\lambda_{\max}\bigg(n^{-1}\sum_{i=1}^n (1,x_i^{\top})^{\top}(1,x_i^{\top})\bigg) \geq 2((1+\epsilon'') + (d/n)^{1/2})^2\bigg)\leq \exp(-n(\epsilon'')^2/2).
\end{equation*}
Therefore together with~$2((1+\epsilon'') + (d/n)^{1/2})^2\leq 2(2+\epsilon'')^2$ and~\eqref{psbo}, setting (e.g.)~$\epsilon''=1$, by Theorem~\ref{aslp}\ref{d2}, 
the proof concludes with~$\kappa^* = \pi(2+\epsilon'')^2/(\delta'\cdot(1/7)\cdot (h/2)) = 126\pi/(\delta'h)$.
\end{proof}

\section{Condition number under disproportional asymptotics}\label{sec:disprop} 
In this section, we study the~$d/n\rightarrow 0$ regime under Assumption~\ref{as1}, and provide a numerically tractable high-probability upper bound for the condition number as~$n,d\rightarrow\infty$. This upper bound is illustrated in Figure~\ref{fig2}.

In view of the deterministic lower bound \eqref{sps} for the
Hessian \eqref{probit}, the key step is to establish, using tools from \cite{MR4628026}, a uniform law of large numbers for the class
\[\left\{ \mathbb{R}^{d+1}\ni z\mapsto \mathds{1}_{(-\infty,0]}(z\cdot\theta)(z\cdot v)^2:
\theta\in\mathbb{R}^{d+1},\ v\in\mathbb{S}^d \right\}.\]
Lemma \ref{lem:aux_pe} establishes the required convergence when $v$ ranges over a deterministic finite set, and Lemma \ref{da2} extends it to all $v\in\mathbb{S}^d$ by an $\varepsilon$-net argument.
Lemma \ref{da1} identifies the corresponding population infimum. Combining these ingredients yields Theorem \ref{nmt}.

In this section, we prove results as almost sure convergences. To do that, we implicitly consider the independent coupling of the random variables $(x_i)_{i=1}^n$ and $(y_i)_{i=1}^n$ across $n\in\{1,2,\dots\}$.

\begin{lemma}\label{lem:aux_pe} 
For each $n$, let $N_n \subset \mathbb{S}^{d}$ be deterministic with $|N_n|\leq 9^{d+1}$.
Then, under Assumption~\ref{as1}, it holds almost surely that
\begin{equation}\label{eq:aux_pe}
\max_{v \in N_n}\sup_{\theta\in\mathbb{R}^{d+1}}\bigg|\frac{1}{n}\sum_{i=1}^n \mathds{1}_{(-\infty,0]}(y_i\bar{x}_i\cdot \theta) |\bar{x}_i\cdot v|^2 - \mathbb{E}[\mathds{1}_{(-\infty,0]}(y_1\bar{x}_1\cdot \theta)|\bar{x}_1\cdot v|^2]\bigg| \longrightarrow 0
\end{equation}
as~$n,d\rightarrow\infty$ subject to~$\lim_{n,d\rightarrow\infty}d/n=0$. 
\end{lemma}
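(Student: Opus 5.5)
Fix $v\in N_n$ and write $W_i:=y_i\bar{x}_i$ and $g_{\theta,v}(z):=\mathds{1}_{(-\infty,0]}(z\cdot\theta)|z\cdot v|^2$; note $|W_i\cdot v|=|\bar{x}_i\cdot v|$ since $y_i^2=1$. The plan is to prove, for each fixed $v$, an exponential tail bound for the uniform deviation over $\theta$. A union bound over the at most $9^{d+1}$ elements of $N_n$, followed by Borel--Cantelli along the coupled sequence, will then give the almost sure convergence. Concretely, for any fixed $\eta>0$ we aim for
\[
\mathbb{P}\Big(\sup_{\theta}\big|\tfrac1n\textstyle\sum_i g_{\theta,v}(W_i)-\mathbb{E}g_{\theta,v}(W_1)\big|>\eta\Big)\le \exp(-c\,n\,\eta^{2}/\log^2 n)+\text{(small)}
\]
for all large $n$, with $c$ absolute. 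The factor $9^{d+1}=e^{O(d)}$ is then harmless because $d/n\to0$, and the resulting bound is summable in $n$.

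First comes truncation, since $g$ is unbounded. Because $\bar{x}_1\cdot v\stackrel{d}{=}v_0+(1-v_0^2)^{1/2}Z$ is subgaussian with variance proxy $\le 2$ uniformly in $v$, we choose $M_n=C\log n$. Then:
\begin{itemize}
\item the tail expectation satisfies $\mathbb{E}[|\bar{x}_1\cdot v|^2\mathds{1}_{\{|\bar{x}_1\cdot v|^2>M_n\}}]\to0$ uniformly in $v$;
\item the empirical tail $\frac1n\sum_i|\bar{x}_i\cdot v|^2\mathds{1}_{\{|\bar{x}_i\cdot v|^2>M_n\}}$ is small with overwhelming probability. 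This follows from Bernstein's inequality for subexponential variables (or from an exponential-moment Chernoff bound), which gives an $e^{-cn}$ tail at any fixed level, again uniformly in $v$.
\end{itemize}
Both truncation terms are independent of $\theta$, because $\mathds{1}_{(-\infty,0]}\le1$.

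For the truncated class $\{z\mapsto \mathds{1}_{(-\infty,0]}(z\cdot\theta)\min(|z\cdot v|^2,M_n):\theta\}$ with $v$ fixed, we use symmetrization together with McDiarmid's bounded-difference inequality, where each difference is bounded by $M_n/n$. The Rademacher complexity is at most $M_n$ times that of the half-space indicators. This holds because the factor $h_v(z)=\min(|z\cdot v|^2,M_n)\in[0,M_n]$ is fixed, so one can condition on the sample and apply the contraction principle for multiplication by fixed bounded weights. By \eqref{jak} the half-space complexity is $O(((d+1)\ln(en/(d+1))/n)^{1/2})$. Altogether the deviation is bounded by
\[
O\Big(M_n\big((d+1)\ln(en/(d+1))/n\big)^{1/2}\Big)+M_n\big(t/n\big)^{1/2}
\]
with probability $1-e^{-t}$. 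Taking $t=2(d+1)\ln 9+2\ln n$ absorbs the union bound and makes the failure probabilities summable in $n$. Everything tends to $0$ provided $\log n\cdot((d/n)\log(n/d))^{1/2}\to0$ and $\log n\cdot(\log n/n)^{1/2}\to0$.

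The main obstacle is the first of these conditions: $d/n\to0$ alone does not force $(\log n)^2\,(d/n)\log(n/d)\to0$, for example when $d=n/\log n$. I would avoid the logarithmic loss by truncating at a large constant $M$ rather than at $M_n$. The truncated deviation is then $O(M((d/n)\log(n/d))^{1/2})\to0$. The remaining tail is bounded by $\sup_{v\in N_n}\frac1n\sum_i|\bar{x}_i\cdot v|^2\mathds{1}_{\{|\bar{x}_i\cdot v|^2>M\}}$, which we handle through a Chernoff bound on its exponential moment. For a suitable $\lambda>0$, $\mathbb{E}\exp(\lambda|\bar{x}\cdot v|^2\mathds{1}_{\{\cdot>M\}})\le 1+e^{-cM}$, which gives tail probability $\exp(-n\lambda\eta+ne^{-cM})$. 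Choosing $M$ large relative to $\eta$ makes this $e^{-c'n}$, which beats $9^{d+1}$ since $d=o(n)$. Because $\eta$ is arbitrary, letting $\eta\downarrow0$ along a countable sequence concludes the argument.
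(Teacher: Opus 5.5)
Your final argument (constant truncation) is correct, but it takes a different route from the paper's proof in how it handles the unbounded factor $|\bar{x}_i\cdot v|^2$.

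\textbf{Your route.} You truncate at a constant level $M=M(\eta)$ chosen after $\eta$. The truncated class then has envelope $M$, so the following chain gives a vanishing threshold $O(M\alpha_n)+M\sqrt{t/n}$ with $\alpha_n=((d+1)\ln(en/(d+1))/n)^{1/2}$:
\begin{itemize}
\item symmetrization;
\item the contraction principle, which loses only the constant factor $M$;
\item the Sauer--Shelah/VC pattern bound;
\item McDiarmid's bounded-difference inequality with $t=2(d+1)\ln 9+2\ln n$.
\end{itemize}
Both tails are $O(e^{-cM})$ in expectation uniformly in $v$. The empirical tail is controlled by a Chernoff bound at rate $e^{-c'(\eta)n}$, which beats $9^{d+1}$ because $d=o(n)$. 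You conclude with $\limsup\le C\eta$ almost surely and then let $\eta\downarrow0$ along a countable sequence.

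\textbf{The paper's route.} The paper truncates at the growing level $M_n=\sqrt{n/s_n}$, so it has to keep $M_n$ out of the leading term. It does this in two steps.
\begin{enumerate}
\item It bounds the conditional sub-Gaussian variance proxy of the Rademacher sums over the $K_n$ halfspace patterns by the empirical fourth moment $n^{-2}\sum_i(Z_i\cdot v)^4$, whose mean is bounded by \eqref{34p}, rather than by $M_n^2$. This gives $\mathbb{E}X_v\le C_1\alpha_n$ with no envelope factor.
\item It applies Bousquet's variance-sensitive inequality, in which $M_n$ appears only in the lower-order terms $M_ns_n/n=\sqrt{s_n/n}$ and $M_n\alpha_n s_n/n$.
\end{enumerate}
The tail part $B_n$ is then handled by Bernstein at each fixed $\varepsilon$, much as you handle yours.

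\textbf{Comparison.} Your diagnosis of why the $C\log n$ truncation fails is exactly the obstruction the paper's machinery is built to avoid. The paper makes the leading term envelope-free; you make the envelope constant. Your route is more elementary, using bounded differences and contraction instead of Talagrand--Bousquet. The paper's route gives an explicit deterministic rate for the truncated part, whereas yours gives only a qualitative almost sure limit through the two-parameter $\eta$--$M$ argument.

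\textbf{One point to tidy.} \eqref{jak} bounds the Rademacher complexity without absolute values. For $\sup_\theta|\cdot|$ you should apply Massart's finite-class lemma to the patterns and their negatives, as the paper does; this gives the same order.
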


\begin{remark}[Measurability]\label{rem:measurability}
 Under Assumption~\ref{as1}, the standard separability argument for halfspace classes implies that all suprema in Lemmata~\ref{lem:aux_pe} and~\ref{da2} and their proofs may almost surely be restricted to $\theta \in \mathbb{Q}^{d+1}$ and to a fixed countable dense subset of $\mathbb{S}^d$, the population terms being continuous away from the origin and all quantities continuous in $v$. Hence these suprema are measurable on the completed probability space, and empirical process inequalities for countable classes apply; see
\cite[Section~2.3]{MR4628026}.   
\end{remark}

\begin{proof}
Write $Z_i=y_i\bar x_i$, and let $Z$ be an independent copy of $Z_i$.
For $v=(v_0,\widetilde v)\in\mathbb{S}^d$, the identity $y^2=1$ and Assumption~\ref{as1} 
give $(Z\cdot v)^2=(\bar{x}\cdot v)^2 \stackrel{d}{=} \bigl(v_0+|\widetilde v|g\bigr)^2\leq 2(1+g^2)$, with~$g\sim N(0,1)$.
Consequently,
\begin{equation}\label{34p}
\mathbb{E}\big[(Z\cdot v)^4\big]=v_0^4+6v_0^2|\widetilde v|^2+3|\widetilde v|^4
=3-2v_0^4\leq 3, 
\end{equation}
and there exist absolute constants $C_0,c_0>0$ such that
\begin{equation}\label{eq:gauss_bounds}
\quad \sup_{v\in\mathbb{S}^d}\PP{(Z\cdot v)^2>t}\leq C_0e^{-c_0t},
\quad t\geq 0\,.
\end{equation}
In particular, the variables $(Z\cdot v)^2$, $v\in\mathbb{S}^d$,
are uniformly sub-exponential; see, for example,
\cite[Chapter~2]{Vershynin_2026} or \cite[\S 2.7--2.8]{BLM}.
Define 
\[ s_n:=(d+1)\ln9+2\ln n, \quad M_n:=\sqrt{\frac{n}{s_n}}, \quad \alpha_n := \sqrt{\frac{d+1}{n}\ln\frac{en}{d+1}}.\]
The assumption $d/n \to 0$ gives
\begin{equation}\label{eq:rates} 
\frac{s_n}{n} \to 0, \quad M_n\to\infty, \quad \frac{M_n s_n}{n}
= \sqrt{\frac{s_n}{n}} \to 0, \quad \alpha_n \to 0.\end{equation}

For $v\in N_n$, define $a_v(z):=(z\cdot v)^2\wedge M_n$ and $t_v(z):=\big((z\cdot v)^2-M_n\big)_+$. Since $(z\cdot v)^2=a_v(z)+t_v(z)$, the expression in~\eqref{eq:aux_pe} is bounded by $A_n+B_n$, where
\begin{equation*}
A_n:=\max_{v\in N_n}\sup_{\theta\in\mathbb{R}^{d+1}}\bigg|\frac{1}{n}\sum_{i=1}^n\mathds{1}_{(-\infty,0]}(Z_i\cdot\theta)\,a_v(Z_i)-\mathbb{E}\big[\mathds{1}_{(-\infty,0]}(Z\cdot\theta)\,a_v(Z)\big]\bigg|
\end{equation*}
and $B_n$ is defined analogously with $t_v$ in place of $a_v$.

We first control $B_n$. The pointwise domination $0\leq\mathds{1}_{(-\infty,0]}(z\cdot\theta)\,t_v(z)\leq t_v(z)$, holding for every $\theta$, yields
\begin{align*}
B_n&\leq\max_{v\in N_n}\frac{1}{n}\sum_{i=1}^n t_v(Z_i)+\max_{v\in N_n}\mathbb{E}\,t_v(Z)
\leq
\max_{v\in N_n}\frac{1}{n}\sum_{i=1}^n (t_v(Z_i)-\mathbb{E}\,t_v(Z))+2\max_{v\in N_n}\mathbb{E}\,t_v(Z)
.
\end{align*}
The exponential tail bound in~\eqref{eq:gauss_bounds} gives, uniformly in $v$,
\begin{equation*}
\mathbb{E}\,t_v(Z)=\int_{M_n}^{\infty}\mathbb{P}\big((Z\cdot v)^2>t\big)\,dt\leq\frac{C_0}{c_0}\,e^{-c_0M_n}.
\end{equation*}
Fix $\varepsilon\in(0,1]$. By~\eqref{eq:rates}, for all sufficiently large $n$ we have $(C_0/c_0)e^{-c_0M_n}\leq\varepsilon/2$, and hence~$2\max_{v\in N_n}\mathbb{E}\,t_v(Z)\leq\varepsilon$. 
Since $0\leq t_v(z)\leq(z\cdot v)^2$, the centered variables $t_v(Z_i)-\mathbb{E}\,t_v(Z)$ are sub-exponential uniformly in $v$, $n$ and $d$, so Bernstein's inequality (see, e.g., \cite[\S2.8]{BLM}) and a union bound over $v\in N_n$ give, for an absolute constant $c_1>0$,
\begin{equation*}
\mathbb{P}\big(B_n>2\varepsilon\big)\leq2\cdot9^{d+1}e^{-c_1\varepsilon^2n}.
\end{equation*}
The right-hand side is summable over $n$ because $d/n\to0$. Thus, the Borel--Cantelli lemma, applied for each $\varepsilon=1/k$, $k\in\mathbb{N}$, gives $B_n\to0$ almost surely as $n\to\infty$.

It remains to control $A_n$. Fix $v\in N_n$ and set
\begin{equation*}
X_v:=\sup_{\theta\in\mathbb{R}^{d+1}}\bigg|\frac{1}{n}\sum_{i=1}^n\mathds{1}_{(-\infty,0]}(Z_i\cdot\theta)\,a_v(Z_i)-\mathbb{E}\big[\mathds{1}_{(-\infty,0]}(Z\cdot\theta)\,a_v(Z)\big]\bigg|,
\end{equation*}
so that $A_n=\max_{v\in N_n}X_v$. By the symmetrization inequality \cite[Lemma~2.3.1]{MR4628026},
\begin{equation}\label{eq:symm}
\mathbb{E}X_v\leq2\,\mathbb{E}\,\mathbb{E}_{\xi}\sup_{\theta\in\mathbb{R}^{d+1}}\bigg|\frac{1}{n}\sum_{i=1}^n\xi_i\,\mathds{1}_{(-\infty,0]}(Z_i\cdot\theta)\,a_v(Z_i)\bigg|,
\end{equation}
where $(\xi_i)_{i=1}^n$ are i.i.d.\ Rademacher variables independent of $(Z_i)_i$ and $\mathbb{E}_{\xi}$ denotes the expectation with respect to them. Conditionally on $Z_1,\dots,Z_n$, the supremum in~\eqref{eq:symm} depends only on the patterns $\big(\mathds{1}_{(-\infty,0]}(Z_i\cdot\theta)\big)_{i=1}^n$, $\theta\in\mathbb{R}^{d+1}$. The class of homogeneous halfspaces in $\mathbb{R}^{d+1}$ has VC dimension $d+1$; see \cite[Theorem~9.2]{Shalev-Shwartz_Ben-David_2014}. Hence, by the Sauer--Shelah lemma \cite{Sauer1972,Shelah1972}, the number $K_n$ of such patterns satisfies, for all sufficiently large $n$, $K_n\leq (en/(d+1))^{d+1}.$
For each pattern $S\subset\{1,\dots,n\}$, the sum $n^{-1}\sum_{i\in S}\xi_i\,a_v(Z_i)$ is, conditionally on $(Z_i)_i$, centered sub-Gaussian with variance proxy at most $n^{-2}\sum_{i=1}^na_v(Z_i)^2\leq n^{-2}\sum_{i=1}^n(Z_i\cdot v)^4$, by Hoeffding's lemma \cite[\S2.6]{BLM}. The maximal inequality for finite families of sub-Gaussian variables \cite[\S2.5]{BLM}, applied to the $K_n$ patterns and their negatives, therefore bounds the right-hand side of~\eqref{eq:symm} by
\[
2\,\mathbb{E}\bigg(\frac{2\ln(2K_n)}{n^2}\sum_{i=1}^n(Z_i\cdot v)^4\bigg)^{1/2} \leq 2\,\bigg(\frac{2\ln(2K_n)}{n}\,\mathbb{E}\big[(Z\cdot v)^4\big]\bigg)^{1/2}
\leq C_1\alpha_n,
\]
 where $C_1>0$ is an absolute constant, the first inequality is Jensen's and the second follows from~\eqref{34p} together with $\ln(2K_n)\leq 2(d+1)\ln(en/(d+1))$. Hence
\begin{equation}\label{eq:mean_bound}
\sup_{v\in N_n}\mathbb{E}X_v \leq C_1\alpha_n.
\end{equation}
For fixed $v\in N_n$, the functions
\[z\mapsto\pm\left(\mathds{1}_{(-\infty,0]}(z\cdot\theta)\,a_v(z)-\mathbb{E}\big[\mathds{1}_{(-\infty,0]}(Z\cdot\theta)\,a_v(Z)\big]\right),
\quad\theta\in\mathbb{R}^{d+1},\] are centered, bounded in absolute value by $M_n$ (as $\mathds{1}_{(-\infty,0]}(z\cdot\theta)\,a_v(z)$ and its expectation both lie in $[0,M_n]$) and, by~\eqref{34p}, of variance at most $\mathbb{E}[a_v(Z)^2]\leq\mathbb{E}[(Z\cdot v)^4]\leq3$; moreover, the supremum of their sample means $n^{-1}\sum_{i=1}^n$ over $\theta$ and the two signs is precisely~$X_v$. Consequently, Bousquet's inequality \cite[Theorem~12.5]{BLM}, together with~\eqref{eq:mean_bound}, gives, for every~$s>0$,
\begin{equation*}
\mathbb{P}\left(X_v>C_1\alpha_n + \left( \frac{2s\bigl(3+2C_1M_n\alpha_n\bigr)}{n}\right)^{1/2} + \frac{M_ns}{3n}\right)\leq e^{-s}.
\end{equation*}
Taking $s=s_n$ and applying a union bound over  $v\in N_n$, we obtain

\[\mathbb{P}\left(A_n > C_1\alpha_n + \left( \frac{2s_n\left(3+2C_1M_n\alpha_n\right)}{n}
\right)^{1/2} + \frac{M_ns_n}{3n} \right) \leq 9^{d+1}e^{-s_n} = n^{-2}. \]
The deterministic threshold in this probability converges to zero
by~\eqref{eq:rates}. As $\sum_n n^{-2}<\infty$, the Borel-Cantelli lemma gives $A_n \to 0$ almost surely. 
\end{proof}

\begin{lemma}\label{da2}
Under Assumption~\ref{as1}, it holds almost surely that
\begin{equation}\label{l3eq}
\sup_{\theta\in\mathbb{R}^{d+1},v\in\mathbb{S}^d}\bigg|\frac{1}{n}\sum_{i=1}^n \mathds{1}_{(-\infty,0]}(y_i\bar{x}_i\cdot \theta) |\bar{x}_i\cdot v|^2 - \mathbb{E}[\mathds{1}_{(-\infty,0]}(y_1\bar{x}_1\cdot \theta)|\bar{x}_1\cdot v|^2]\bigg| \longrightarrow 0
\end{equation}
as~$n,d\rightarrow\infty$ subject to~$\lim_{n,d\rightarrow\infty}d/n=0$. 
\end{lemma}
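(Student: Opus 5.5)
We extend Lemma~\ref{lem:aux_pe} from a finite net to the whole sphere with an $\varepsilon$-net argument. The key fact is that the integrand is a quadratic form in $v$.

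\textbf{Step 1 (net and reduction to matrices).} For each $n$, fix a deterministic $1/4$-net $N_n\subset\mathbb{S}^d$ with $|N_n|\leq 9^{d+1}$; such a net exists by the standard volumetric bound. For $\theta\in\mathbb{R}^{d+1}$ define the symmetric positive semidefinite matrices
\[
\widehat{Q}_\theta:=\frac1n\sum_{i=1}^n \mathds{1}_{(-\infty,0]}(y_i\bar{x}_i\cdot\theta)\,\bar{x}_i\bar{x}_i^{\top},\qquad Q_\theta:=\mathbb{E}\big[\mathds{1}_{(-\infty,0]}(y_1\bar{x}_1\cdot\theta)\,\bar{x}_1\bar{x}_1^{\top}\big],
\]
and set $D_\theta:=\widehat{Q}_\theta-Q_\theta$. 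The quantity in~\eqref{l3eq} is then exactly $\sup_\theta\sup_{v\in\mathbb{S}^d}|v^{\top}D_\theta v|=\sup_\theta\|D_\theta\|_{\mathrm{op}}$, since $D_\theta$ is symmetric.

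\textbf{Step 2 (net comparison).} For a symmetric matrix $D$ and a $1/4$-net $N$, the standard argument gives $\|D\|_{\mathrm{op}}\leq 2\max_{w\in N}|w^{\top}Dw|$. To see this, take $v$ attaining the norm and $w\in N$ with $|v-w|\leq 1/4$. Then
\[
|v^{\top}Dv-w^{\top}Dw|\leq |(v-w)^{\top}D(v+w)|\leq 2\cdot\tfrac14\|D\|_{\mathrm{op}}=\tfrac12\|D\|_{\mathrm{op}}.
\]
Applying this for each $\theta$ and taking the supremum yields
\[
\sup_{\theta,v}|v^{\top}D_\theta v|\leq 2\max_{w\in N_n}\sup_\theta |w^{\top}D_\theta w|.
\]
The right-hand side is twice the left-hand side of~\eqref{eq:aux_pe}, which tends to $0$ almost surely by Lemma~\ref{lem:aux_pe}. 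This proves~\eqref{l3eq}.

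\textbf{Main obstacle.} The only point needing care is that the argument must not lose a factor depending on $d$, and it does not: the constant is $2$ for every $d$ because of the quadratic structure. Measurability of the suprema is covered by Remark~\ref{rem:measurability}, so the suprema may be restricted to countable sets. A crude bound such as $\|\widehat{Q}_\theta\|\leq\lambda_{\max}(n^{-1}\sum_i\bar{x}_i\bar{x}_i^{\top})$ is not needed, because the net inequality applies directly to the difference $D_\theta$.
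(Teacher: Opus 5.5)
Your proposal is correct and takes the same route as the paper: a deterministic $1/4$-net $N_n$ with $|N_n|\leq 9^{d+1}$, the quadratic-form net bound $\sup_{v\in\mathbb{S}^d}|v^{\top}D_\theta v|\leq 2\max_{w\in N_n}|w^{\top}D_\theta w|$ for the symmetric matrix $D_\theta$, then Lemma~\ref{lem:aux_pe}. The only difference is that you prove the net inequality directly via $v^{\top}Dv-w^{\top}Dw=(v-w)^{\top}D(v+w)$, where the paper cites Vershynin's Lemma~4.4.2.
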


\begin{proof}
For $\theta\in\R^{d+1}$, define the symmetric matrix 
\[A_\theta:=\frac{1}{n}\sum_{i=1}^n\mathds{1}_{(-\infty,0]}(y_i\bar{x}_i\cdot\theta)\,\bar{x}_i\bar{x}_i^{\top}-\mathbb{E}\big[\mathds{1}_{(-\infty,0]}(y_1\bar{x}_1\cdot\theta)\,\bar{x}_1\bar{x}_1^{\top}\big],\] so that the expression inside the absolute value in \eqref{l3eq} equals $v^{\top}A_\theta v$, and the left-hand side of~\eqref{l3eq} equals $\sup_{\theta\in\mathbb{R}^{d+1},\,v\in\mathbb{S}^d}|v^{\top}A_\theta v|$,  which, by Remark~\ref{rem:measurability}, is almost surely equal to a countable supremum and is therefore measurable.

Let $N_n\subset\mathbb{S}^d$ be a deterministic $1/4$-net of $\mathbb{S}^d$ with $|N_n|\leq9^{d+1}$, which exists by the covering-number bound \cite[Corollary~4.2.11]{Vershynin_2026}. Since each $A_\theta$ is symmetric, the net bound for quadratic forms \cite[Lemma~4.4.2]{Vershynin_2026} gives, for every $\theta\in\mathbb{R}^{d+1}$,
\[\sup_{v\in\mathbb{S}^d}|v^\top A_\theta v|\leq 2\max_{v\in N_n}|v^\top A_\theta v|.\] 
Taking the supremum over $\theta$ and interchanging it with the maximum over the finite set $N_n$ yields
\[ \sup_{\theta\in\mathbb{R}^{d+1},\,v\in\mathbb{S}^d}|v^\top A_\theta v| \leq 2\max_{v\in N_n}\sup_{\theta\in\mathbb{R}^{d+1}}|v^\top A_\theta v|.\]
The right-hand side is twice the quantity in~\eqref{eq:aux_pe}, so Lemma~\ref{lem:aux_pe} implies that it converges to zero almost surely, which proves~\eqref{l3eq}.
\end{proof}

\begin{lemma}\label{da1}
Under Assumption~\ref{as1}, it holds 
that~$\inf_{\theta\in\mathbb{R}^{d+1},v\in\mathbb{S}^d}\mathbb{E}[\mathds{1}_{(-\infty,0]}(y_1\bar{x}_1\cdot \theta)|\bar{x}_1\cdot v|^2] = m_{\beta_0,\gamma_0}^*$, where
\begin{equation}\label{mbgdef}
m_{\beta_0,\gamma_0}^* :=\inf_{\bar{v}=(\bar{v}_0,\bar{v}_1,\bar{v}_2)\in\mathbb{S}^2}\big(\mathbb{E}[\Phi(-|\beta_0+\gamma_0Z|)|\bar{v}_0+\bar{v}_1Z|^2] + \mathbb{E}[\Phi(-|\beta_0+\gamma_0Z|)]\cdot \bar{v}_2^2\big)
\end{equation}
with~$Z\sim N(0,1)$.
\end{lemma}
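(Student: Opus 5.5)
The plan is to compute the infimum in two stages: first over~$\theta$ for fixed~$v$, using a pointwise lower bound that is attained, and then over~$v\in\mathbb{S}^d$, using rotational invariance to reduce to~$\mathbb{S}^2$. Throughout I use the normalisation~$\beta=(\gamma_0,0,\dots,0)$ and write~$\eta(x):=\beta_0+\gamma_0x_1$ and~$w(x_1):=\Phi(-|\eta(x)|)$.

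\textbf{Step 1: reduce the $\theta$-infimum to a single weighted second moment.} Fix~$v$ and~$\theta$. I condition on~$x_1$ (the covariate part of~$\bar{x}_1$) and use~$\mathbb{P}(y_1=1\mid x_1)=\Phi(\eta)$, exactly as in the proof of Theorem~\ref{mth}. This gives
\begin{equation*}
\mathbb{E}\big[\mathds{1}_{(-\infty,0]}(y_1\bar{x}_1\cdot\theta)|\bar{x}_1\cdot v|^2\big]
=\mathbb{E}\Big[|\bar{x}_1\cdot v|^2\big(\Phi(\eta)\mathds{1}_{\{\bar{x}_1\cdot\theta\leq0\}}+\Phi(-\eta)\mathds{1}_{\{\bar{x}_1\cdot\theta\geq0\}}\big)\Big].
\end{equation*}
The two indicators sum to at least one. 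Hence the bracket is pointwise at least~$\min(\Phi(\eta),\Phi(-\eta))=w(x_1)$, so for every~$\theta$ the expression is at least~$\mathbb{E}[w(x_1)|\bar{x}_1\cdot v|^2]$.

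For the matching upper bound I take~$\theta=\bar{\beta}$ when~$\bar{\beta}\neq0$, so that~$\bar{x}_1\cdot\theta=\eta$. Then the indicator~$\mathds{1}_{\{\eta\le 0\}}$ selects~$\Phi(\eta)=w$, and the indicator~$\mathds{1}_{\{\eta\geq0\}}$ selects~$\Phi(-\eta)=w$. The two indicators overlap only on~$\{\eta=0\}$. This set is null when~$\gamma_0>0$, and empty when~$\gamma_0=0\neq\beta_0$. In the null case~$\bar{\beta}=0$, I instead take~$\theta=e_1$: then~$\Phi(\pm\eta)=1/2=w$, and the overlap~$\{x_1=0\}$ is null.

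In every case the infimum over~$\theta$ is attained and equals~$\mathbb{E}[w(x_1)|\bar{x}_1\cdot v|^2]$. Note that~$\theta=0$ must not be used, since then both indicators equal one. The care needed in choosing~$\theta$ in these degenerate cases is the only delicate point I expect.

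\textbf{Step 2: reduce the $v$-infimum to~$\mathbb{S}^2$.} Write~$v=(v_0,v_1,\widetilde v)$ with~$\widetilde v\in\mathbb{R}^{d-1}$. Then~$\bar{x}_1\cdot v=v_0+v_1x_{11}+\widetilde v\cdot(x_{21},\dots,x_{d1})$. The last term is distributed as~$|\widetilde v|Z'$ with~$Z'\sim N(0,1)$ independent of~$x_{11}$, and~$w$ depends only on~$x_{11}$. Expanding the square, the cross term vanishes because~$\mathbb{E}[Z']=0$ and~$Z'$ is independent of~$x_{11}$. This leaves
\begin{equation*}
\mathbb{E}\big[w(x_{11})|v_0+v_1x_{11}|^2\big]+|\widetilde v|^2\,\mathbb{E}[w(x_{11})].
\end{equation*}
Setting~$\bar{v}=(v_0,v_1,|\widetilde v|)\in\mathbb{S}^2$ shows the value is at least~$m^*_{\beta_0,\gamma_0}$. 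Conversely, any~$\bar{v}\in\mathbb{S}^2$ with~$\bar{v}_2\geq0$ is realised by choosing~$\widetilde v=(\bar{v}_2,0,\dots,0)$, which requires~$d\geq2$; this holds in the asymptotic regime considered. The sign of~$\bar{v}_2$ is irrelevant because only~$\bar{v}_2^2$ appears in~\eqref{mbgdef}.

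Taking the infimum over~$v$ after the infimum over~$\theta$ then yields exactly~$m^*_{\beta_0,\gamma_0}$ as defined in~\eqref{mbgdef}, which proves the lemma.
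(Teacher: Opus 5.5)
Your proof is correct and follows essentially the same route as the paper. Both arguments bound the conditional probability of $\{y_1\bar x_1\cdot\theta\le 0\}$ pointwise below by $\Phi(-|\bar x_1\cdot\bar\beta|)$, attain that bound at $\theta=\bar\beta$ (or at any $\theta\neq 0$ in the null case), and then reduce the $v$-infimum to $\mathbb{S}^2$ via the independent-Gaussian decomposition.

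Two differences are minor. Keeping both indicators lets you handle $\theta=0$ without the separate case the paper treats. Your $d\ge 2$ caveat is also unnecessary: $\mathbb{E}[\Phi(-|\beta_0+\gamma_0Z|)]$ is a diagonal entry of the $2\times 2$ block in \eqref{mbgdef}, so the infimum can always be taken with $\bar v_2=0$.
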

\begin{remark}\label{imt}
\begin{enumerate}[label=(\roman*)]
\item \label{imt1}
In the null case~$\beta_0=\gamma_0=0$, we have~$m_{0,0}^*=\inf(\frac{1}{2}\mathbb{E}[|\bar{v}_0+\bar{v}_1 Z|^2] + \frac{1}{2}\bar{v}_2^2) = \frac{1}{2}\inf(\bar{v}_0^2 +\bar{v}_1^2 +\bar{v}_2^2) = \frac{1}{2}$. By definition of~$\Phi$, we have~$m_{\beta_0,\gamma_0}^*\leq m_{0,0}^*= 1/2$ for all~$\beta_0,\gamma_0$.
\item 
By symmetry of~$Z\sim N(0,1)$ around~$0$, it is easy to see that~$m_{\beta_0,\gamma_0}^*$ is symmetric in~$\beta_0$ around~$0$.
\item
Analytical simplifications are possible when~$\beta_0=0$, or when~$\gamma_0=0$. For example, with similar arguments as in~\ref{imt1}, we have~$m_{\beta_0,0}^*=\Phi(-|\beta_0|)$.
\end{enumerate}
\end{remark}

\begin{proof}
First, consider the~$\theta=0$ case. Here we have~$\mathbb{E}[\mathds{1}_{(-\infty,0]}(y_1\bar{x}_1\cdot \theta)|\bar{x}_1\cdot v|^2] = \mathbb{E}[|\bar{x}_1\cdot v|^2] \geq \inf_{(\bar{v}_0,\bar{v}_1)\in\mathbb{S}^1}\mathbb{E}[|\bar{v}_0+\bar{v}_1Z|^2]\geq m_{\beta_0,\gamma_0}^*$,
where the last inequality follows by~$\Phi(-z)\leq 1/2<1$ for all~$z\geq 0$; and the equality is realized for $\bar{v}_0=0$.
For~$\theta\neq 0$, it suffices to consider~$\theta\in\mathbb{S}^d$. 
Since for any~$\theta\in\mathbb{S}^d$, we have a.s. (in particular excluding~$\bar{x}_1\cdot\theta=0$) that
\begin{equation*}
\mathbb{P}(y_1\bar{x}_1\cdot \theta\leq 0|\bar{x}_1) = \begin{cases}
\Phi(\bar{x}_1\cdot \bar{\beta}) &\textrm{if }\bar{x}_1\cdot \theta <0,\\
\Phi(-\bar{x}_1\cdot \bar{\beta}) &\textrm{if }\bar{x}_1\cdot \theta >0,
\end{cases}
\end{equation*}
we have 
\begin{equation*}
\inf_{\theta,v\in\mathbb{S}^d}\mathbb{E}[\mathds{1}_{(-\infty,0]}(y_1\bar{x}_1\cdot \theta)|\bar{x}_1\cdot v|^2] = \inf_{\theta,v\in\mathbb{S}^d}\mathbb{E}[\Phi(-\textrm{sgn}( \bar{x}_1\cdot\theta)\bar{x}_1\cdot \bar{\beta})|\bar{x}_1\cdot v|^2] 
= \inf_{v\in\mathbb{S}^d} \mathbb{E}[\Phi(-|\bar{x}_1\cdot \bar{\beta}|)|\bar{x}_1\cdot v|^2],
\end{equation*}
where the second equality uses that~$\Phi(-\textrm{sgn}(t)s)\geq\Phi(-|s|)$ for all~$s,t\in\mathbb{R}$, with equality almost surely for the choice~$\theta=\bar{\beta}/|\bar{\beta}|$ if~$\bar{\beta}\neq0$, while for~$\bar{\beta}=0$ both integrands equal~$\tfrac{1}{2}|\bar{x}_1\cdot v|^2$ for every~$\theta$.
Recall that we can assume w.l.o.g.\ that~$\bar{\beta} = (\beta_0,\gamma_0,0,\dots,0)$. 
Since~$\bar{x}_1=(1,x_1^{\top})^{\top}$ with~$x_1\sim N(0,I_d)$, denoting~$v=(v_0,\dots,v_d)$, we have
\begin{equation*}
\mathbb{E}[\Phi(-|\bar{x}_1\cdot \bar{\beta}|)|\bar{x}_1\cdot v|^2] = \mathbb{E}[\Phi(-|\beta_0+\gamma_0x_{11}|)|v_0+v_1x_{11}|^2] + \mathbb{E}[\Phi(-|\beta_0+\gamma_0x_{11}|)]\cdot|(v_2,\dots, v_d)|^2,
\end{equation*}
from which the assertion follows. 
\end{proof}

\begin{theorem}\label{nmt}
Let Assumption~\ref{as1} hold. Let~$\delta>0$. Let~$m_{\beta_0,\gamma_0}^*$ be given by~\eqref{mbgdef}. Then 
\[\mathbb{P}(L/m
\leq (1+\delta)\pi/(2m_{\beta_0,\gamma_0}^*))\to 1\]
as~$n,d\rightarrow \infty$ subject to~$\lim_{n,d\rightarrow\infty}d/n=0$.
\end{theorem}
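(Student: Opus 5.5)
The plan is to bound $m$ from below and $L$ from above separately, both after normalizing by $n$, and then combine them.

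\emph{Lower bound on $m$.} By~\eqref{sps},
\[
m\geq \inf_{v\in\mathbb{S}^d} S_v = \frac{2}{\pi}\inf_{\theta\in\mathbb{R}^{d+1},\,v\in\mathbb{S}^d}\sum_{i=1}^n \mathds{1}_{(-\infty,0]}(y_i\bar{x}_i\cdot\theta)|\bar{x}_i\cdot v|^2 .
\]
Lemma~\ref{da2} says the normalized empirical quantity converges uniformly in $(\theta,v)$, almost surely, to its population counterpart, and Lemma~\ref{da1} identifies the population infimum as $m^*_{\beta_0,\gamma_0}$. Taking the infimum over $(\theta,v)$ on both sides of the uniform approximation then gives
\[
\liminf \frac{m}{n}\geq \frac{2}{\pi}\,m^*_{\beta_0,\gamma_0}\qquad \text{almost surely.}
\]
This needs $m^*_{\beta_0,\gamma_0}>0$. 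Positivity should follow from~\eqref{mbgdef}: the weight $\Phi(-|\beta_0+\gamma_0Z|)$ is strictly positive, and $|\bar v_0+\bar v_1Z|^2$ can vanish only on a null set unless $\bar v_0=\bar v_1=0$, in which case $\bar v_2^2=1$ and $h>0$. Compactness of $\mathbb{S}^2$ together with continuity then gives a strictly positive minimum.

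\emph{Upper bound on $L$.} By~\eqref{pro}, $\psi\le 1$, so
\[
L\leq \lambda_{\max}\Big(\sum_{i=1}^n\bar{x}_i\bar{x}_i^{\top}\Big).
\]
I need
\[
\frac{1}{n}\lambda_{\max}\Big(\sum_{i=1}^n\bar{x}_i\bar{x}_i^{\top}\Big)\to 1
\]
in probability when $d/n\to 0$. The population matrix is $\mathbb{E}[\bar{x}_1\bar{x}_1^{\top}]=I_{d+1}$, since $\bar x_1=(1,x_1^\top)^\top$ with $x_1\sim N(0,I_d)$. Lemma~\ref{da2} with $\theta=0$ gives
\[
\sup_{v\in\mathbb{S}^d}\Big|\frac{1}{n}\sum_{i=1}^n|\bar{x}_i\cdot v|^2-1\Big|\to 0
\]
almost surely, because the indicator equals one for every $i$ when $\theta=0$. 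This yields the limit directly. Alternatively, one can use Gaussian covariance concentration as in the proof of Proposition~\ref{map}, which is slightly messier because of the intercept.

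\emph{Combining.} Almost surely,
\[
\limsup \frac{L}{m}\leq \frac{\pi}{2m^*_{\beta_0,\gamma_0}} .
\]
Hence for any $\delta>0$, eventually $L/m\leq(1+\delta)\pi/(2m^*_{\beta_0,\gamma_0})$, and almost sure convergence implies the stated convergence in probability. The step needing the most care is passing the uniform convergence through the double infimum over $(\theta,v)$ and the scaling by $n$. However, once $\sup|\cdot|\to0$ holds, the bound $|\inf f_n-\inf f|\le\sup|f_n-f|$ handles this directly. The remaining checks are measurability, which Remark~\ref{rem:measurability} covers, and the positivity of $m^*_{\beta_0,\gamma_0}$, which is needed for the bound to be finite.
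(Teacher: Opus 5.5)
Your proof is correct. It follows the paper's structure for the lower bound on $m$, namely \eqref{sps} combined with Lemmata~\ref{da2} and~\ref{da1}. The genuine difference is in how you bound $L$ from above.

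The paper splits $\sum_i\bar x_i\bar x_i^\top$ into a block-diagonal part and an off-diagonal part carrying $\sum_i x_i$, and applies Weyl's inequality. It then uses the Gaussian covariance bound (Theorem~6.1 of the cited reference) for $\lambda_{\max}(n^{-1}\sum_i x_ix_i^\top)$, plus a second-moment bound for $|n^{-1}\sum_i x_i|$. You instead observe that at $\theta=0$ every indicator equals one and $\mathbb{E}[\bar x_1\bar x_1^\top]=I_{d+1}$. Lemma~\ref{da2}, whose supremum includes $\theta=0$, then already gives $\lambda_{\max}(n^{-1}\sum_i\bar x_i\bar x_i^\top)\to1$ almost surely.

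Your route is more economical. It reuses the uniform law already needed for $m$, avoids a separate random-matrix input, and gives almost-sure control of both $L/n$ and $m/n$. The final step then reduces to $\liminf$/$\limsup$ bookkeeping followed by passing from almost-sure to in-probability convergence. The paper's route instead bounds $L$ with an off-the-shelf non-asymptotic result with explicit tail probabilities, independent of the empirical-process machinery of Lemma~\ref{lem:aux_pe}. Since the $m$ part is only asymptotic anyway, that buys little here.

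Your compactness argument for $m^*_{\beta_0,\gamma_0}>0$ is a useful addition: the paper relies on this positivity but only asserts it. Equivalently, $m^*_{\beta_0,\gamma_0}$ is the smallest eigenvalue of a $3\times3$ matrix whose upper $2\times2$ block is the Gram matrix of $1,Z$ under the weight $\Phi(-|\beta_0+\gamma_0Z|)>0$, hence positive definite.

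One caution on your aside: the bound used in the proof of Proposition~\ref{map} loses a factor $2$. Reusing it as written would not give the constant $\pi/(2m^*_{\beta_0,\gamma_0})$, which is why the paper handles the intercept through the block decomposition instead.
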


\begin{proof}
By Lemmata~\ref{da2} and~\ref{da1}, and the lower bound in~\eqref{sps} for~$m$, we have for any~$\eta>0$ that
\begin{equation}\label{mfa}
\mathbb{P}(m/n\geq 2(m_{\beta_0,\gamma_0}^* - \eta)/\pi)\rightarrow 1.
\end{equation}
On the other hand, we have
\begin{equation*}
\sum_{i=1}^n\bar{x}_i\bar{x}_i^{\top} = \begin{pmatrix}
n & 0\\
0 & \sum_{i=1}^nx_ix_i^{\top}
\end{pmatrix}
+ 
\begin{pmatrix}
0 & \sum_{i=1}^nx_i^{\top}\\
\sum_{i=1}^nx_i & 0
\end{pmatrix},
\end{equation*}
where the second matrix on the right-hand side has operator norm $|\sum_{i=1}^nx_i|$, its nonzero eigenvalues being $\pm\,|\sum_{i=1}^nx_i|$. By Weyl's inequality and the properties~\eqref{probit}-\eqref{eq:def_m_L} of~$L$, it follows that 
\begin{equation}\label{lka}
\frac{L}{n} \leq \max\bigg(1,\lambda_{\max}\bigg(\frac{1}{n}\sum_{i=1}^nx_ix_i^{\top}\bigg)\bigg) + \bigg|\frac{1}{n}\sum_{i=1}^nx_i\bigg|.
\end{equation}
Fix~$\eta\in(0,m^*_{\beta_0,\gamma_0})$. Since~$x_i\sim N(0,I_d)$ by assumption and $d/n\rightarrow 0$, Theorem~6.1 in~\cite{MR3967104} gives
\begin{equation*}
\mathbb{P}(\lambda_{\max}(n^{-1}\textstyle\sum_{i=1}^nx_ix_i^{\top})\leq 1+\eta/2)\rightarrow 1
\end{equation*}
Moreover, the coordinates of $n^{-1}\sum_{i=1}^nx_i$ are i.i.d.\ $N(0,1/n)$, so that $\mathbb{E}[|n^{-1}\sum_{i=1}^nx_i|^2]=d/n\rightarrow 0$ and hence $|n^{-1}\sum_{i=1}^nx_i|\leq\eta/2$ with probability tending to one.
Combining this with~\eqref{mfa} yields, with probability tending to one,
\[\frac{L}{m}=\frac{L/n}{m/n}\leq\frac{(1+\eta)\,\pi}{2(m^*_{\beta_0,\gamma_0}-\eta)}.\]
Since~$m^*_{\beta_0,\gamma_0}>0$, the right-hand side is at most $(1+\delta)\pi/(2m^*_{\beta_0,\gamma_0})$ for all sufficiently small~$\eta>0$, which concludes the proof.
\end{proof}

\begin{figure}[h]
  \centering
  \begin{subfigure}[h]{0.38\linewidth}
    \adjincludegraphics[trim={6mm 0 3mm 0},width=\linewidth]{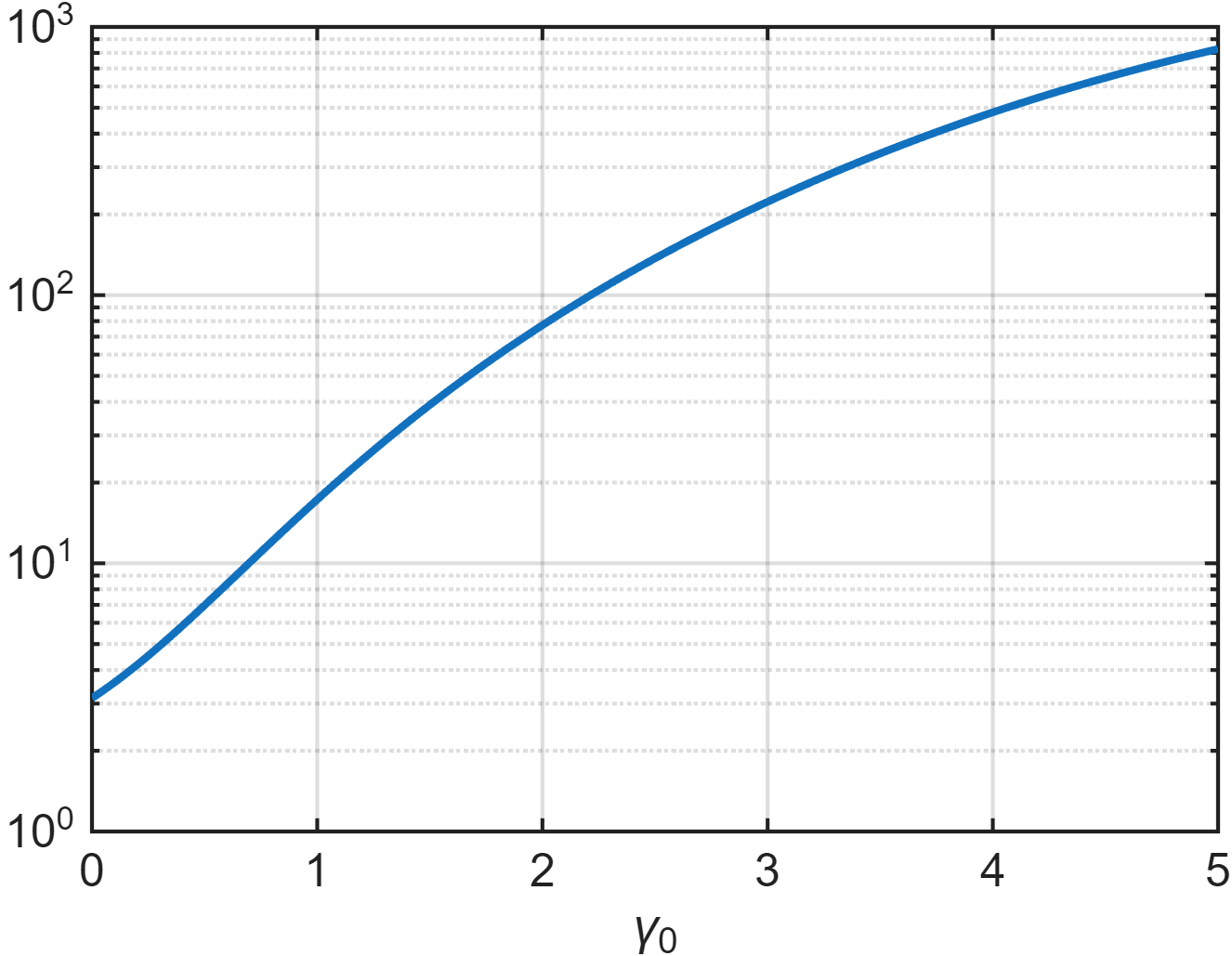}
  \end{subfigure}
  \hspace{0.03\linewidth}
    \begin{subfigure}[h]{0.48\linewidth}
    \adjincludegraphics[trim={6mm 0 3mm 0},width=\linewidth]{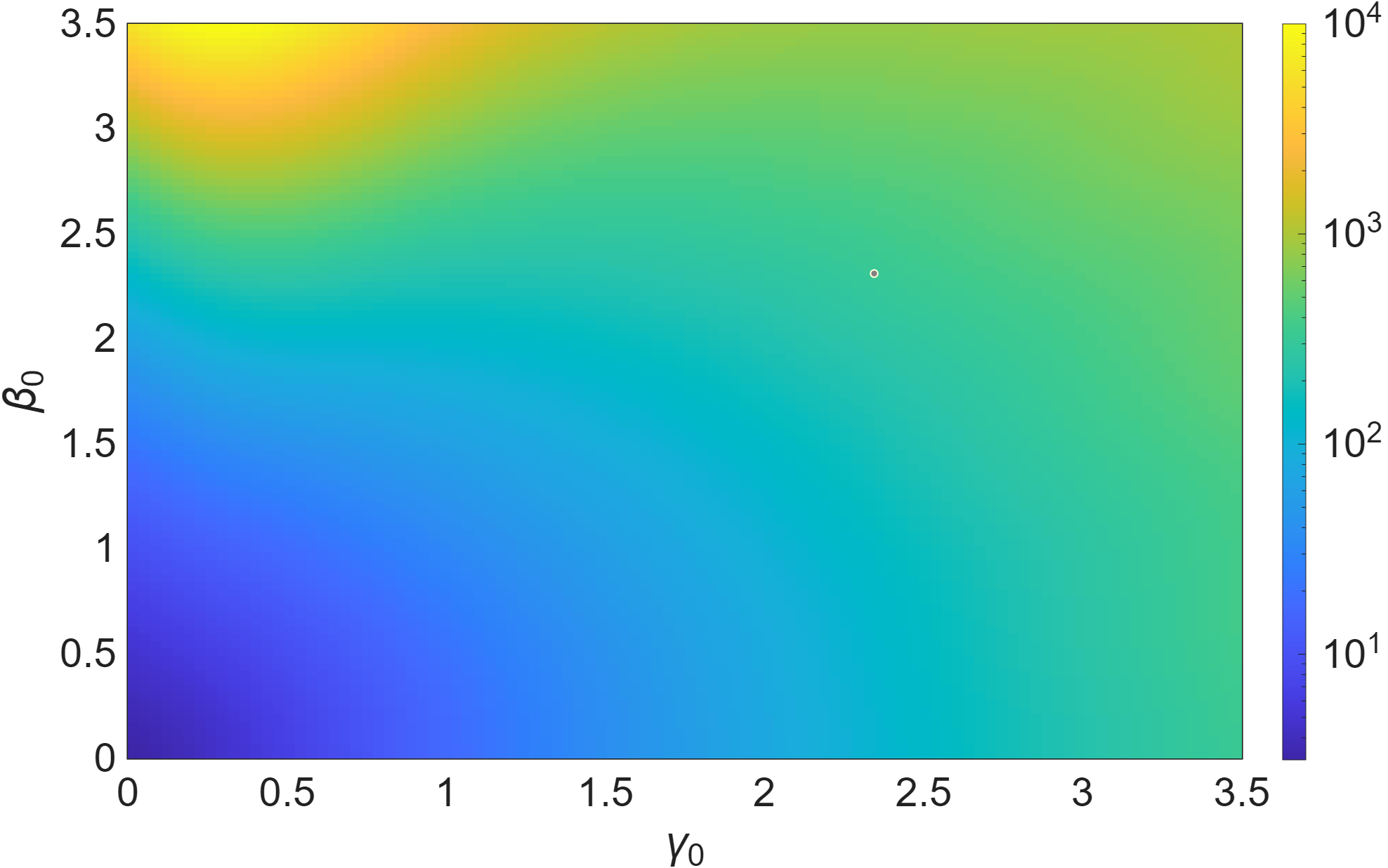}
  \end{subfigure}
\caption{Asymptotic ($n,d\to\infty$, $d/n\rightarrow0$) upper bound on condition number. Left: $\beta_0=0$ and varying~$\gamma_0$. Right: varying~$\beta_0$ and $\gamma_0$. Both plots represent~$\pi/(2m_{\beta_0,\gamma_0}^*)$ given by~\eqref{mbgdef}.
}\label{fig2}
\end{figure}

\paragraph{Acknowledgments}
MC and GZ acknowledge support acknowledge support from the European Research Council (ERC),
through StG “PrSc-HDBayLe” grant ID 101076564.
\bibliography{bibliography}
\end{document}